\newcommand{\A}{\mathbb{A}}
\newcommand{\FF}{\mathbb{F}}
\newcommand{\GG}{\mathbb{G}}
\newcommand{\NN}{\mathbb{N}}
\newcommand{\fL}{\mathfrak{L}}
\newcommand{\cM}{\mathcal{M}}
\newcommand{\cT}{\mathcal{T}}
\newcommand{\cX}{\mathcal{X}}
\newcommand{\fa}{\mathfrak{a}}
\newcommand{\fb}{\mathfrak{b}}
\newcommand{\fc}{\mathfrak{c}}
\newcommand{\fg}{\mathfrak{g}}
\newcommand{\fh}{\mathfrak{h}}
\newcommand{\fn}{\mathfrak{n}}
\newcommand{\fs}{\mathfrak{s}}
\newcommand{\ft}{\mathfrak{t}}
\newcommand{\fz}{\mathfrak{z}}
\newcommand{\p}{\partial}
\DeclareMathOperator{\Aut}{Aut}
\DeclareMathOperator{\Char}{char}
\DeclareMathOperator{\Cent}{Cent}
\DeclareMathOperator{\Der}{Der}
\DeclareMathOperator{\Ens}{Ens}
\DeclareMathOperator{\GL}{GL}
\DeclareMathOperator{\Gr}{Gr}
\DeclareMathOperator{\Hom}{Hom}
\DeclareMathOperator{\id}{id}
\DeclareMathOperator{\Lie}{Lie}
\DeclareMathOperator{\Nor}{Nor}
\DeclareMathOperator{\rk}{rk}
\DeclareMathOperator{\U}{U}
\DeclareMathOperator{\Stab}{Stab}
\DeclareMathOperator{\Spec}{Spec}
\DeclareMathOperator{\Tor}{Tor}
\theoremstyle{plain}
\numberwithin{equation}{section}
\newtheorem{Theorem}{Theorem}[section]
\newtheorem{Lemma}[Theorem]{Lemma}
\newtheorem{Corollary}[Theorem]{Corollary}
\newtheorem{Proposition}[Theorem]{Proposition}
\newtheorem{Question}[Theorem]{Question}
\theoremstyle{Theorem}
\theoremstyle{remark}
\newtheorem*{Remark}{Remark}
\newtheorem*{Remarks}{Remarks}
\newtheorem*{Definition}{Definition}
\numberwithin{equation}{section}
\begin{document}
\title{Toral stabilizers of restricted Lie algebras}
\author{Hao Chang}
\address{Mathematisches Seminar, Christian-Albrechts-Universit\"at zu Kiel, Ludewig-Meyn-Str. 4, 24098 Kiel, Germany}
\email{chang@math.uni-kiel.de}
\date{\today}
\begin{abstract}
This paper investigates the toral stabilizer of a finite-dimensional restricted Lie
algebra $(\fg,[p])$, defined over an algebraically closed field $k$ of positive characteristic.
We compute the group of toral stabilizers of some simple Lie algebras and
give a new characterization of the solvable restricted Lie algebras. Moreover,
we determine the subalgebra structure of exceptional simple Lie algebras in good characteristic.
We also provide applications, which concern the distribution of weight spaces.
\end{abstract}
\maketitle
\setcounter{tocdepth}{1}
\tableofcontents
\bigskip

\section{Introduction}
Let $(\fg,[p])$ be a finite-dimensional restricted Lie algebra.
In \cite{Fa04}, Farnsteiner introduced a finite group $S(\fg)$ associated to $\fg$,
which is called toral stabilizer.
The structure group $S(\fg)$ coincides with the Weyl group in case
of classical type or the Lie algebra of a smooth group.
In \cite{BFS}, the authors defined the Weyl group for arbitrary restricted Lie algebra
and established a connected between the Weyl group and toral stabilizer.
In particular, they computed the toral stabilizers of the Lie algebras of Cartan type.

In this paper, we first compute the group of toral stabilizers for some simple Lie algebras.
The toral stabilizer of restricted generalized Witt algebra is determined (see Theorem \ref{S(g)Zassenhaus}).
We give a new characterization of the solvable restricted
Lie algebras and prove that a finite-dimensional restricted Lie algebra $\fg$ is solvable if and only if
the group $S(\fg)$ is a $p$-group (Theorem \ref{maintheoremsolvable}).
The main ingredient of the proof is the classification of absolute toral rank two simple Lie algebras due to
Premet and Strade (see \cite{PS3} and \cite{S2}).

Let $\fg$ be a exceptional simple classical Lie algebra.
In Section \ref{section subalgebra}, we give a criteria for determining
the subalgebra structure of $\fg$.
Recently, Herpel and Stewart proved that only the Witt algebra can occur
as a simple subalgebra of $\fg$ (see \cite{HS}).
Our approach is completely
different by using toral stabilizer to analyse the subgroup of Weyl groups.

In the last, we consider a question of Skryabin (see \cite{Skr15}).
We study the weight space decomposition.
Skryabin's question can be reduced to consider the torus
of maximal dimension (see Proposition \ref{proposition 1}).
We answer the question in some particular cases.
Also, some known results are reproved in a simple way.


\section{Schemes of tori and toral stabilizers}\label{Section 1}
Throughout, we shall be working over an algebraically closed field
$k$ of characteristic $\Char(k)=p>0$.
All vector spaces are assumed to be finite-dimensional.

Given a restricted Lie algebra $(\fg,[p])$,
let $M_k$ and $\text{Ens}$ be the categories of commutative $k$-algebras and sets, respectively.
For a torus $\ft\subseteq\fg$, we consider the scheme $\cT_\fg:M_k\rightarrow\Ens$, given by
$$\cT_\fg(R):=\{\varphi\in\Hom_p(\ft\otimes_k R,\fg\otimes_k R);~\varphi~\text{ is a split injective }R\text{-linear map}\}$$
for every $R\in M_k$. Here $\Hom_p(\ft\otimes_k R,\fg\otimes_k R)$
denotes the set of homomorphisms of restricted $R$-Lie algebras and $\fg\otimes_k R$ carries
the natural structure of restricted $R$-Lie algebra via
$$[x\otimes r,y\otimes s]:=[x,y]\otimes rs,~(x\otimes r)^{[p]}:=x^{[p]}\otimes r^p,~\forall x,y\in\fg, r,s\in R.$$
According to \cite[(1.4)]{FV01}, $\cT_\fg$ is a smooth,
affine algebraic scheme.
Consequently, the connected components of $\cT_\fg$ are irreducible,
and there exists precisely one irreducible component $\cX_{\ft}(k)\subseteq\cT_\fg(k)$
such that the given embedding $\ft\hookrightarrow\fg$
belongs to $\cX_{\ft}(k)$.

We denote by $\mu(\fg)$ and $\rk(\fg)$ the maximal dimension of all
tori $\ft\subseteq\fg$ and the minimal dimension of all Cartan subalgebras $\fh\subseteq\fg$,
respectively.
According to \cite[(7.4)]{Fa04},
the set
$$\Tor(\fg):=\{\ft\subseteq\fg;~\ft~\text{torus}, \dim_k\ft=\mu(\fg)\}$$
of tori of maximal dimension is locally closed within the Grassmannian $\Gr_{\mu(\fg)}(\fg)$ of $\fg$.
Thanks to \cite[(1.6)]{FV01} and \cite[(3.5)(9.3)]{Fa04},
the variety $\Tor(\fg)$ is a smooth irreducible affine variety of
dimension $\dim\Tor(\fg)=\dim_k\fg-\rk(\fg)$.

We consider the scheme $\cT_\fg$ in case $\ft\in\Tor(\fg)$.
Then the automorphism groups $\Aut_p(\fg)$ and $\Aut_p(\ft)$ naturally act on $\cT_\fg(k)$ via
$$g.\varphi:=g\circ\varphi~\text{and}~h.\varphi:=\varphi\circ h^{-1},~\forall \varphi\in\cT_\fg(k),g\in\Aut_p(\fg),h\in\Aut_p(\ft)$$
respectively. Both actions commute, and $\Aut_p(\ft)\cong\GL_{\mu(\fg)}(\mathbb{F}_p)$.
Following \cite{Fa04}, we let
$$S(\fg,\ft):=\Stab_{\Aut_p(\ft)}(\cX_{\ft}(k))$$
be the stabilizer of component $\cX_{\ft}(k)$ in $\Aut_p(\ft)$.
Then $S(\fg,\ft)$ is called the toral stabilizer of $\fg$ (relative to $\ft$).
If $\ft'\in\Tor(\fg)$ is another torus, then there exists an isomorphism
$h:\ft\rightarrow\ft'$ such that $S(\fg,\ft')=hS(\fg,\ft)h^{-1}$
(see \cite[Theorem 4.1]{Fa04} and \cite[Theorem 1.1]{BFS}).
Hence we set $$S(\fg):=S(\fg,\ft)$$
be the toral stabilizer of $\fg$.

From now on we assume $\fh\unlhd \fg$ to be a $p$-ideal of $\fg$,
and let $\ft\in\Tor(\fg)$.
Then $\ft\cap\fh$ and $(\ft+\fh)/\fh$ are tori of maximal dimension of $\fh$ and $\fg/\fh$,
respectively (see for example \cite[Proposition 2.3]{BFS}).
Also, we can find a subtorus $\ft'\subseteq\ft$ such that $\ft=(\ft\cap\fh)\oplus\ft'$ (see \cite[Page 1347]{BFS}).
Furthermore,
a $p$-ideal $\fh\unlhd\fg$ is called elementary abelian if $[\fh,\fh]=\{0\}=\fh^{[p]}$.

For future reference, we recall the following results, cf.\cite[(5.6)(5.12)]{Fa04} and \cite[(2.1)(2.3)]{BFS}:

\begin{Lemma}\label{lemma1}
Let $\fh\subseteq\fg$ be a $p$-subalgebra of the restricted Lie algebra $(\fg,[p])$ and $\ft\in\Tor(\fg)$.
\begin{enumerate}
\item[(1)]
If $\fh\unlhd\fg$ is an elementary abelian ideal,
then we have $S(\fg,\ft)\cong S(\fg/\fh,\ft)$.
\item[(2)] If $\fh\unlhd\fg$ is a toral $p$-ideal,
then there is a surjective map $S(\fg,\ft)\rightarrow S(\fg/\fh,\fs)$,
where $\ft=\fh\oplus\fs$.
\item[(3)] If $\mu(\fh)=\mu(\fg)$, then $S(\fh,\ft)\subseteq S(\fg,\ft)$ is a subgroup.
\item[(4)] If $\fh\unlhd\fg$ is a $p$-ideal, then there is an injective homorphism
$$S(\fg,\ft)\hookrightarrow\left(\begin{array}{cc}
S(\fh,\ft\cap\fh)&\Lie_p(\ft',\ft\cap\fh)\\
0&S(\fg/\fh,\ft')\\
\end{array}\right),$$
where $\ft=(\ft\cap\fh)\oplus\ft'$.
\end{enumerate}
\end{Lemma}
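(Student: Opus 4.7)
The plan is to handle each part by exploiting the functoriality of the torus scheme $\cT_\fg$ and the $\Aut_p(\ft)$-equivariant description of its irreducible components; the lemma assembles results from \cite{Fa04} and \cite{BFS}.

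I would dispatch part (3) first, as it is essentially tautological. The assumption $\mu(\fh)=\mu(\fg)$ lets us pick a common $\ft\in\Tor(\fh)\cap\Tor(\fg)$, after which the inclusion $\fh\hookrightarrow\fg$ induces an $\Aut_p(\ft)$-equivariant closed embedding $\cT_\fh\hookrightarrow\cT_\fg$ carrying $\cX_\ft^\fh(k)$ into $\cX_\ft^\fg(k)$, since both contain the distinguished embedding of $\ft$. Any automorphism of $\ft$ stabilizing the smaller component therefore stabilizes the larger one.

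For (1) and (2) my main tool is the functorial morphism $\pi_*:\cT_\fg\to\cT_{\fg/\fh}$ obtained by post-composition with $\pi:\fg\twoheadrightarrow\fg/\fh$. In case (1), elementary abelianness forces every element of $\fh$ to be $p$-nilpotent, so that tori of $\fg$ meet $\fh$ trivially and $\pi_*$ remains a morphism to the scheme of split injections. The delicate step is to lift an arbitrary $p$-homomorphism $\bar\varphi:\ft\to\fg/\fh$ to $\ft\to\fg$: using the Jacobson formula together with $\fh^{[p]}=[\fh,\fh]=0$, the obstruction becomes a solvable coboundary condition, and this makes $\pi_*$ an $\Aut_p(\ft)$-equivariant isomorphism on the distinguished components, yielding $S(\fg,\ft)\cong S(\fg/\fh,\ft)$. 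In case (2), a toral $p$-ideal sits inside some maximal torus, so writing $\ft=\fh\oplus\fs$ the restriction $\pi\circ\varphi|_\fs$ produces a dominant $\Aut_p(\fs)$-equivariant morphism $\cX_\ft^\fg(k)\to\cX_\fs^{\fg/\fh}(k)$, whose stabilizer map $S(\fg,\ft)\to S(\fg/\fh,\fs)$ is surjective.

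The hard part is (4). Given $\sigma\in S(\fg,\ft)$, the subspace $\ft\cap\fh$ is intrinsically determined by $\fh$ and hence stable under $\sigma$, yielding a restriction $\sigma_1:=\sigma|_{\ft\cap\fh}\in\Aut_p(\ft\cap\fh)$ and an induced automorphism $\sigma_2$ of $\ft/(\ft\cap\fh)\cong\ft'$. Arguments analogous to those of (2), applied to $\fh$ and $\fg/\fh$ respectively, place $\sigma_1\in S(\fh,\ft\cap\fh)$ and $\sigma_2\in S(\fg/\fh,\ft')$; the deviation of $\sigma$ from the block-diagonal lift $\sigma_1\oplus\sigma_2$ is a $p$-linear map $\ft'\to\ft\cap\fh$, accounting for the upper-triangular entry. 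The key remaining step, which is the main obstacle, is to verify injectivity of the resulting assignment, i.e.\ to show that $\sigma$ is reconstructed uniquely from these three pieces of data; this rests on a precise description of how the components $\cX_{\ft\cap\fh}^\fh(k)$ and $\cX_{\ft'}^{\fg/\fh}(k)$ assemble to form $\cX_\ft^\fg(k)$, which is the content of \cite[(2.3)]{BFS}.
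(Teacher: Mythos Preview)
The paper does not prove this lemma at all: it is stated as a compilation of known results with the citation ``cf.\ \cite[(5.6)(5.12)]{Fa04} and \cite[(2.1)(2.3)]{BFS}'' and no further argument. Your sketch is therefore not competing against a proof in the paper but rather reconstructing the arguments of those references, and in broad outline it does so correctly.

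One point in your treatment of (4) deserves correction. You write that ``$\ft\cap\fh$ is intrinsically determined by $\fh$ and hence stable under $\sigma$,'' but this is precisely the nontrivial step, not a tautology: $\sigma$ is an abstract element of $\Aut_p(\ft)$ and sees neither $\fg$ nor $\fh$ directly. What must be shown is that every $\varphi\in\cX_\ft^\fg(k)$ satisfies $\varphi(\ft\cap\fh)\subseteq\fh$, so that post-composition with $\sigma^{-1}$ forces $\sigma(\ft\cap\fh)=\ft\cap\fh$; this is the actual content of \cite[(2.3)]{BFS}. Conversely, the step you flag as the ``main obstacle'' --- injectivity of the assignment $\sigma\mapsto(\sigma_1,\sigma_2,\text{off-diagonal})$ --- is immediate linear algebra once the block structure is in place, since those three data determine $\sigma$ as a linear map on $\ft=(\ft\cap\fh)\oplus\ft'$. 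So your diagnosis of where the difficulty lies in (4) is inverted, though you do cite the right source for the missing argument.
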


\begin{Lemma}\label{lemma2}
Let $\fh\subseteq\fg$ be a $p$-subalgebra of the restricted Lie algebra $(\fg,[p])$.
If there exists a torus $\ft\in\Tor(\fg)$
such that $\ft\cap\fh\in\Tor(\fh)$,
then there is an injective homomorphism
$$S(\fh,\ft\cap\fh)\hookrightarrow S(\fg,\ft).$$
\end{Lemma}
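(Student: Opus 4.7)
The plan is to realise the claimed injection through the natural extension of $p$-automorphisms of $\ft\cap\fh$ by the identity on $\ft'$. Using the splitting $\ft=(\ft\cap\fh)\oplus\ft'$, the assignment $h\mapsto\tilde h$ with $\tilde h|_{\ft\cap\fh}=h$ and $\tilde h|_{\ft'}=\id_{\ft'}$ yields an injective group homomorphism $\Aut_p(\ft\cap\fh)\hookrightarrow\Aut_p(\ft)$, and the task reduces to showing this sends $S(\fh,\ft\cap\fh)$ into $S(\fg,\ft)$. Write $\varphi_\fh$ and $\varphi_\fg$ for the given embeddings $\ft\cap\fh\hookrightarrow\fh$ and $\ft\hookrightarrow\fg$, which are base-points of $\cX_{\ft\cap\fh}(k)$ and $\cX_\ft(k)$ respectively.

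The main tool is a morphism of $k$-schemes $\Psi\colon\cZ\to\cT_\fg$ transporting information from $\cX_{\ft\cap\fh}$ to $\cX_\ft$. Here $\cZ\subseteq\cT_\fh$ is the closed subscheme defined by $[\varphi((\ft\cap\fh)\otimes_k R),\,\ft'\otimes_k R]=0$ in $\fg\otimes_k R$, and on $R$-points
$$\Psi(\varphi)(x+y):=\varphi(x)+y,\qquad x\in(\ft\cap\fh)\otimes_k R,\ y\in\ft'\otimes_k R.$$
The commutativity condition defining $\cZ$ is exactly what makes $\Psi(\varphi)$ a Lie algebra homomorphism (since $\ft\otimes_k R$ is abelian); the $[p]$-compatibility then follows from abelianness of $\ft$ via Jacobson's formula, and split injectivity is preserved because $\ft'\cap\fh=0$. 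One checks that $\Psi(\varphi_\fh)=\varphi_\fg$ and that $\Psi$ is equivariant in the sense $\Psi(\varphi\circ h^{-1})=\Psi(\varphi)\circ\tilde h^{-1}$ for $h\in\Aut_p(\ft\cap\fh)$.

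For $h\in S(\fh,\ft\cap\fh)$, the point $\varphi_\fh\circ h^{-1}$ lies in $\cX_{\ft\cap\fh}(k)$ by hypothesis, and its image is exactly $\ft\cap\fh\subseteq\fh$ (as $h\in\Aut_p(\ft\cap\fh)$), which commutes with $\ft'$ by abelianness of $\ft$; so both $\varphi_\fh$ and $\varphi_\fh\circ h^{-1}$ lie in $\cZ(k)\cap\cX_{\ft\cap\fh}(k)$. Provided an irreducible subvariety of this intersection joins the two points, its $\Psi$-image is an irreducible subset of $\cT_\fg(k)$ containing $\varphi_\fg=\Psi(\varphi_\fh)$, hence contained in $\cX_\ft(k)$; by equivariance, $\varphi_\fg\circ\tilde h^{-1}=\Psi(\varphi_\fh\circ h^{-1})\in\cX_\ft(k)$, yielding $\tilde h\in S(\fg,\ft)$.

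The key obstacle is precisely the irreducibility assertion, since a priori $\cZ\cap\cX_{\ft\cap\fh}$ could separate the two points into distinct components. I expect this to be bypassed either by showing outright that $\cX_{\ft\cap\fh}\subseteq\cZ$ (using the density of the $\Aut_p(\fh)^\circ$-orbit of $\varphi_\fh$ in $\cX_{\ft\cap\fh}$ together with closedness of $\cZ$), or by first passing to the $p$-subalgebra $\fh+\ft'\subseteq\fg$ when it makes sense and applying Lemma~\ref{lemma1}(3), which gives $S(\fh+\ft',\ft)\subseteq S(\fg,\ft)$ via $\mu(\fh+\ft')=\mu(\fg)$, thereby reducing the problem to a case in which the definition of $\Psi$ becomes tautological.
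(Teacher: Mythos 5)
Your construction coincides with the one the paper actually gives: its entire proof is the assignment $\varphi\mapsto\varphi\oplus\id_{\ft'}$ together with the assertion that it ``gives rise to'' the embedding $g\mapsto g\oplus\id_{\ft'}$. So you have reconstructed the intended argument, and the obstacle you isolate is genuine and is simply not addressed in the paper: for a general $\varphi\in\cT_\fh(R)$ the map $\varphi\oplus\id_{\ft'}$ is not a homomorphism of restricted Lie algebras unless $[\varphi((\ft\cap\fh)\otimes_kR),\ft'\otimes_kR]=0$, so $\iota_*$ is only defined on your locus $\cZ$, and one must still join $\varphi_\fh$ to $\varphi_\fh\circ h^{-1}$ by an irreducible (or connected) subset of $\cZ(k)\cap\cX_{\ft\cap\fh}(k)$ before concluding that their images lie in one component of $\cT_\fg(k)$.

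Neither of your proposed bypasses closes this gap. The inclusion $\cX_{\ft\cap\fh}\subseteq\cZ$ is false in general: take $\fg=\fsl_3$ with $p>3$, $\ft$ the diagonal torus, $h=\mathrm{diag}(1,-1,0)$, $\fh=kh\oplus kE_{13}$ and $\ft'=k\,\mathrm{diag}(1,1,-2)$, so that $\ft\cap\fh=kh\in\Tor(\fh)$. The embeddings $h\mapsto h+\lambda E_{13}$ form the orbit of the base point under the connected group $\{\exp(\mu\,\ad E_{13})|_{\fh}\}_{\mu\in k}\subseteq\Aut_p(\fh)$, hence lie in $\cX_{\ft\cap\fh}(k)$, yet $[\mathrm{diag}(1,1,-2),E_{13}]=3E_{13}\neq0$, so these points are not in $\cZ(k)$. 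Density of the $\Aut_p(\fh)^{\circ}$-orbit cannot help here, because automorphisms of $\fh$ have no reason to preserve commutation with $\ft'$, which sits outside $\fh$. The second bypass fares no better: $\fh+\ft'$ need not be a $p$-subalgebra, and even when it is (as in the example above), the well-definedness question for $\Psi$ reappears verbatim for $\fh\subseteq\fh+\ft'$, so nothing becomes tautological. What is missing --- from your proposal and, as far as the written argument goes, from the paper --- is a mechanism for connecting $\varphi_\fh$ and $\varphi_\fh\circ h^{-1}$ inside the locus where the extension by $\id_{\ft'}$ makes sense, e.g.\ via the universal embedding and generic-torus machinery of \cite{Fa04}. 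As submitted, your proof is incomplete at exactly the point you flag.
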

\begin{proof}
As before, we can choose a subtorus $\ft'\subseteq\ft$ with $\ft=(\ft\cap\fh)\oplus\ft'$.
The canonical inclusion
$$\iota:\fh\hookrightarrow\fg$$
induces a morphism
$$\iota_{*}:\cT_\fh(k)\rightarrow \cT_{\fg}(k);\varphi\mapsto\varphi\oplus\id_{\ft'},$$
which gives rise to an embedding
$$S(\fh,\ft\cap\fh)\hookrightarrow S(\fg,\ft);g\mapsto g\oplus\id_{\ft'}.$$
\end{proof}

\begin{Remark}
The Lemma \ref{lemma1}(3) is a special case of Lemma \ref{lemma2}.
\end{Remark}

The reader is referred to \cite{SF} and \cite{S1} for basic
facts concerning the structure theory of simple Lie algebras.

We denote by $W(n), S(n), H(n);n=2r$ and $K(n);n=2r+1$ the restricted
simple Lie algebras of Cartan type.
\begin{Lemma}\cite[Theorem 5.3]{BFS}.\label{S(g)cartan type}
Let $\fg$ be a Lie algebra of Cartan type ($W,S,H,K$).
Then there is an isomorphism
$$S(\fg)\cong\GL_{\mu(\fg)}(\FF_p).$$
\end{Lemma}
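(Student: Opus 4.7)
The plan is to establish the nontrivial inclusion $\GL_{\mu(\fg)}(\FF_p)\subseteq S(\fg,\ft)$, as $S(\fg,\ft)\subseteq\Aut_p(\ft)\cong\GL_{\mu(\fg)}(\FF_p)$ is immediate from the definitions. Since $\cT_\fg$ is smooth, its connected and irreducible components coincide, and $\Aut_p(\ft)$ permutes them with $S(\fg,\ft)$ as the stabiliser of $\cX_\ft(k)$. Equivalently, one needs $\cT_\fg$ to be irreducible, i.e.\ connected.

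The central tool is that the $\Aut_p(\fg)$- and $\Aut_p(\ft)$-actions on $\cT_\fg$ commute, and that the connected group $\Aut_p(\fg)^\circ$ preserves every irreducible component. A direct consequence is: for $h\in\Aut_p(\ft)$, the membership $h\in S(\fg,\ft)$ is equivalent to $\iota$ and $h.\iota=\iota\circ h^{-1}$ lying in the same irreducible component of $\cT_\fg$, which in turn is implied by the existence of a connected subvariety of $\cT_\fg$ containing both; the most natural such subvarieties are $\Aut_p(\fg)^\circ$-orbits.

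For the Witt algebra $W(n)=\Der(A_n)$ with $A_n=k[x_1,\ldots,x_n]/(x_i^p)$ and $\ft=\sum_i\FF_p\cdot x_i\partial_i$, the automorphism group $\Aut(A_n)\cong\Aut_p(W(n))$ is smooth and connected, being an open subscheme of the affine space of $n$-tuples in the maximal ideal of $A_n$. The orbit of $\iota$ under the linear subgroup $\GL_n(k)\subseteq\Aut(A_n)$ already contains $\iota\circ\pi^{-1}$ for every permutation $\pi\in S_n$, furnishing $S_n\subseteq S(\fg,\ft)$ at no cost. A dimension count using $\dim\Aut(A_n)$ and the centraliser of $\ft$ shows, however, that $\cX_\ft(k)$ is strictly larger than this orbit, and the remaining cosets of $S_n$ in $\GL_n(\FF_p)$ must be produced by enlarging the family with non-linear automorphisms of $A_n$. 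Types $S$, $H$ and $K$ are handled analogously by working inside the stabilisers in $\Aut(A_n)$ of the volume, symplectic and contact differential forms respectively, whose linear parts are a priori only $\SL_n$, $\Sp_{2r}$ and $\mathrm{CSp}_{2r}$.

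The main obstacle, and the technical heart of \cite[Theorem 5.3]{BFS}, is the explicit construction of non-linear families of embeddings $\ft\to\fg$ connecting $\iota$ to $\iota\circ h^{-1}$ for $h\in\GL_{\mu(\fg)}(\FF_p)$ outside the classical Weyl group part; this requires a careful analysis of higher-order deformations of toral embeddings in each Cartan type, since the $\Aut_p(\fg)^\circ$-orbit of $\iota$ does not itself hit the full set $\{\iota\circ h^{-1}\}$ in the fibre over $\ft$. Once these families are in hand, the smoothness-plus-connectedness principle of the second paragraph closes the argument and yields $S(\fg,\ft)=\GL_{\mu(\fg)}(\FF_p)$.
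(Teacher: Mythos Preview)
The paper does not supply a proof here; the lemma is quoted from \cite[Theorem 5.3]{BFS}. From the way the present paper deploys \cite{BFS} elsewhere (the proofs of Lemma~\ref{toralstablizerofh21phitau} and Theorem~\ref{S(g)Zassenhaus}, and the paragraph before Theorem~\ref{sylowsubalgebraofwn}) one can reconstruct the actual argument, and it is not the one you sketch.

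The route taken in \cite{BFS} is via \emph{generic} tori. For $\fg=W(n;\underline{1})$ one uses $\ft_0=\langle (1+x_1)D_1,\dots,(1+x_n)D_n\rangle$, whose $\Aut_p(\fg)^\circ$-orbit is dense in $\Tor(\fg)$; then \cite[Theorem 1.5]{BFS} gives $S(\fg,\ft_0)\cong\Nor_G(\ft_0)/\Cent_G(\ft_0)$, and this quotient is $\GL_n(\FF_p)$ by Premet's computation \cite[Theorem~1, Lemma~2]{P92}. For types $S$ and $H$ (and analogously $K$), \cite[Lemmas 4.1, 4.3]{BFS} construct restricted embeddings $W(\mu(\fg);\underline{1})\hookrightarrow\fg$ with matching maximal tori, so that Lemma~\ref{lemma2} injects $\GL_{\mu(\fg)}(\FF_p)$ into $S(\fg)$ and equality is forced.

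Your outline works instead with the non-generic torus $\ft=\bigoplus_i k\,x_iD_i$ and proposes to connect $\iota$ to each $\iota\circ h^{-1}$ by explicit families in $\cT_\fg$. The obstacle you meet is an artefact of that choice: for this $\ft$ the quotient $\Nor_G(\ft)/\Cent_G(\ft)$ really is much smaller than $\GL_n(\FF_p)$ (already trivial when $n=1$), so the $G^\circ$-orbit of $\iota$ misses most of the fibre. You then attribute the missing ``non-linear families'' to \cite{BFS}, but that is not what \cite{BFS} does, and since you do not construct them either, the argument is left open at precisely the point you label its technical heart. Passing to the generic torus $\ft_0$ dissolves the difficulty: the full $\GL_{\mu(\fg)}(\FF_p)$ already sits inside $\Nor_G(\ft_0)$, no higher-order deformations are needed, and the types $S,H,K$ are handled by the subalgebra embedding rather than by analysing stabilisers of differential forms.
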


We mention the case of the restricted Melikian algebra $\cM(1,1)$.
Recall that this is a simple graded Lie algebra over a field
of characteristic $p=5$, which is neither classical nor of Cartan type.
We refer the reader to \cite{Skr} for the details.
Note that $\mu(\cM(1,1))=2$ (\cite[Corollary 4.4]{Skr}).

\begin{Lemma}\cite[Theorem 5.5]{BFS}.\label{S(g)melikian}
Assume that $k$ has characteristic $p=5$.
Let $\cM(1,1)$ be the restricted Melikian algebra.
Then there is an isomorphism
$$S(\cM(1,1))\cong\GL_2(\FF_p).$$
\end{Lemma}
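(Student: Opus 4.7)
The plan is to realize the rank-two Jacobson--Witt algebra $W(2)$ as a restricted subalgebra of $\cM(1,1)$ sharing a maximal torus, and then to deduce the result from the Cartan-type computation Lemma \ref{S(g)cartan type} together with the inclusion principle Lemma \ref{lemma2}. The observation that makes this work is that $\cM(1,1)$ comes equipped with a natural $\ZZ$-grading whose degree-zero component is isomorphic, as a restricted Lie algebra, to $W(1,1) = W(2)$. So the first step I would take is to recall this construction from \cite{Skr} and verify that the $[p]$-map $\cM(1,1)$ inherits restricts on the degree-zero part to the standard $[p]$-map of $W(2)$; this makes $W(2) \subseteq \cM(1,1)$ a $p$-subalgebra.

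Next, I would fix $\ft \subseteq W(2)$ to be the standard torus spanned by the Euler derivations $x_1\partial_1$ and $x_2\partial_2$. Then $\dim \ft = 2 = \mu(W(2))$, and by Skryabin's result $\mu(\cM(1,1)) = 2$ (\cite[Corollary 4.4]{Skr}), the same $\ft$ is a maximal torus of $\cM(1,1)$. In particular $\ft \in \Tor(\cM(1,1))$ and $\ft \cap W(2) = \ft \in \Tor(W(2))$, so the hypothesis of Lemma \ref{lemma2} is satisfied. That lemma yields an injection $S(W(2),\ft) \hookrightarrow S(\cM(1,1),\ft)$, and Lemma \ref{S(g)cartan type} identifies the left-hand side with $\GL_2(\FF_p)$. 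Since $S(\cM(1,1),\ft) \subseteq \Aut_p(\ft) \cong \GL_2(\FF_p)$ by definition, the chain
\[
\GL_2(\FF_p) \hookrightarrow S(\cM(1,1),\ft) \hookrightarrow \GL_2(\FF_p)
\]
forces equality throughout, yielding the claimed isomorphism.

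The main obstacle is the structural input used at the outset: one must know that $\cM(1,1)$ contains a copy of $W(2)$ which is closed under $[p]$ and contains a torus of maximal dimension of $\cM(1,1)$. This relies on the explicit graded description of the Melikian algebra and on Skryabin's computation of $\mu(\cM(1,1))$. Once these external inputs are in place, the rest of the argument is a formal consequence of the two lemmas collected in Section \ref{Section 1}, and no further case analysis or computation inside $\cM(1,1)$ is needed.
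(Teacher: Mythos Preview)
The paper does not give its own proof of this lemma; it simply quotes \cite[Theorem 5.5]{BFS}. Your argument is correct and is in fact the same strategy used in \cite{BFS}: embed $W(2;\underline{1})$ as a restricted subalgebra of $\cM(1,1)$ containing a torus of maximal dimension, apply Lemma~\ref{lemma2} (equivalently Lemma~\ref{lemma1}(3), since $\mu(W(2;\underline{1}))=\mu(\cM(1,1))=2$), and sandwich $S(\cM(1,1),\ft)$ between $S(W(2;\underline{1}),\ft)\cong\GL_2(\FF_p)$ and $\Aut_p(\ft)\cong\GL_2(\FF_p)$.

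One small correction to your write-up: in the standard $\ZZ$-grading of $\cM(1,1)$ (where $x^{(a)}D_i$ has degree $3|a|-3$, $x^{(a)}\widetilde{D}_i$ has degree $3|a|-1$, and $x^{(a)}$ has degree $3|a|-2$), the degree-zero piece is $\fgl_2$, not $W(2;\underline{1})$. What you want is the induced $\ZZ/3\ZZ$-grading, whose $\bar 0$-component is exactly $W(2;\underline{1})$; since $p\cdot 0\equiv 0\pmod 3$, this component is automatically $[p]$-closed, giving the restricted subalgebra you need. Alternatively, you can simply say that $W(2;\underline{1})$ is the span of all $\ZZ$-homogeneous pieces of degree divisible by $3$, which is visibly $[p]$-stable. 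With this adjustment the proof goes through as written.
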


\begin{Lemma}\label{toralstablizerofh21phitau}
Suppose that $p>3$.
Let $\fL:= H(2;\underline{1};\Phi(\tau))^{(1)}$ be the simple non-graded Hamiltonian Lie algebra and
$\fg:=\fL_p$ the minimal $p$-envelope of $\fL$.
Then there is an isomorphism
$$S(\fg)\cong\GL_2(\FF_p).$$
\end{Lemma}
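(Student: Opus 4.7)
The strategy is a squeeze argument. Since $\fg=\fL_p$ has absolute toral rank $\mu(\fg)=2$ (a standard fact for filtered deformations of the graded Hamiltonian algebra $H(2;\underline{1})^{(2)}$), one has $\Aut_p(\ft)\cong\GL_2(\FF_p)$ for every $\ft\in\Tor(\fg)$, so the upper bound $S(\fg)=S(\fg,\ft)\subseteq\GL_2(\FF_p)$ is automatic. All the work lies in establishing the reverse inclusion.

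To produce this reverse inclusion I would apply Lemma \ref{lemma2} to a $p$-subalgebra $\fh\subseteq\fg$ of graded Cartan type with $\mu(\fh)=2$, the natural candidate being (the minimal $p$-envelope of) the graded Hamiltonian algebra $H(2;\underline{1})^{(1)}$. By Lemma \ref{S(g)cartan type} such an $\fh$ satisfies $S(\fh)\cong\GL_2(\FF_p)$, and Lemma \ref{lemma2} then embeds this group into $S(\fg,\ft)$ as soon as one exhibits a torus $\ft\in\Tor(\fg)$ with $\ft\cap\fh\in\Tor(\fh)$. Combined with the upper bound above, this closes the squeeze and yields $S(\fg)\cong\GL_2(\FF_p)$.

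The principal obstacle is the concrete realisation of $\fh$ inside $\fg$ together with a compatible maximal torus. Recall that $\fL=H(2;\underline{1};\Phi(\tau))^{(1)}$ is constructed from the graded Hamiltonian $H(2;\underline{1})^{(2)}$ by twisting the Hamiltonian form via a fixed closed $1$-form $\tau$. Working in the explicit divided-power presentation, I would choose a two-dimensional maximal torus $\ft$ of $\fg$ obtained by modifying the ``standard'' torus $kx_1\p_1\oplus kx_2\p_2$ to accommodate the twist, decompose $\fg$ into $\ft$-root spaces, and then identify a $p$-closed subspace containing $\ft$ which is spanned by a suitable Cartan part together with selected root vectors and their $p$-th powers, and which is isomorphic to the minimal $p$-envelope of a graded Hamiltonian algebra.

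This identification, although concrete, depends delicately on the form $\tau$ and on the choice of torus, and it is the only non-formal step of the argument. Once it is carried out, Lemmas \ref{S(g)cartan type} and \ref{lemma2} supply the two halves of the squeeze automatically and the isomorphism $S(\fg)\cong\GL_2(\FF_p)$ follows.
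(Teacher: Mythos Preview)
Your squeeze argument has a genuine gap: the graded Hamiltonian algebra in two variables has absolute toral rank $1$, not $2$. In general $\mu(H(2r;\underline{1})^{(2)})=r$, so for $r=1$ Lemma~\ref{S(g)cartan type} only yields $S(\fh)\cong\GL_1(\FF_p)=\FF_p^{*}$, and Lemma~\ref{lemma2} then contributes at best a cyclic subgroup of order $p-1$ to $S(\fg)$, far short of $\GL_2(\FF_p)$. (Compare Theorem~\ref{classification}: it is $H(4;\underline{1})^{(1)}$, not $H(2;\underline{1})^{(1)}$, that is the graded Hamiltonian of toral rank $2$; and $H(2;\underline{1})^{(2)}$ appears in the proof of Theorem~\ref{maintheoremsolvable} among the simples of toral rank $1$.) The second toral direction of $\fg$ lives in $\fg\setminus\fL$, inside the $p$-envelope, and is not supplied by any graded Hamiltonian subalgebra.

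Nor can the strategy be repaired by a different Cartan-type $\fh$. The restricted Cartan-type algebras with $\mu=2$ from Theorem~\ref{classification}, namely $W(2;\underline{1})$, $S(3;\underline{1})^{(1)}$, $H(4;\underline{1})^{(1)}$, $K(3;\underline{1})$, all have dimension at least $2p^2$, while $\dim_k\fg=p^2+1$; the one non-graded candidate $W(1;\underline{2})_p$ has the same dimension $p^2+1$ as $\fg$ but is not isomorphic to it. Hence there is no $p$-subalgebra of $\fg$ to which Lemma~\ref{S(g)cartan type} or Theorem~\ref{S(g)Zassenhaus} applies with $\mu=2$, and a squeeze via Lemma~\ref{lemma2} is unavailable here. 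The paper proceeds instead through the automorphism group $G=\Aut_p(\fg)$: it embeds $\fg$ into $W(2;\underline{1})$, uses that maximal tori of $\fg$ are generic so that $S(\fg,\ft)\cong\Nor_G(\ft)/\Cent_G(\ft)$, notes $\Cent_G(\ft)=\{1\}$ via Premet, and then reads off from Strade's explicit description of $\Nor_G(\ft)$ that restriction to $\ft$ realises all of $\GL_2(\FF_p)$.
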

\begin{proof}
Recall that $\fL\subseteq W(2,\underline{1})$ is a subalgebra (see \cite[Page 41]{S2}).
Hence we can regard $\fg$ as a restricted subalgebra of $ W(2,\underline{1})$.
Note that $\mu(\fg)=2$ (see \cite[Page 2]{S2}).
According to \cite[Corollary 10.3.3]{S2} and \cite[Theorem 1.5]{BFS},
there is an isomorphism
$$\Nor_G(\ft)/\Cent_G(\ft)\cong S(\fg,\ft),$$
where $G$ is the automorphism group of $(\fg,[p])$.
Moreover, it follows from \cite[Theorem 7.3.2]{S1} that $G$ is a subgroup of $\Aut(W(2,\underline{1}))$.
Thanks to \cite[Lemma 2]{P92}, the group $\Cent_G(\ft)$ is trivial.
Hence it suffices to determine the group $\Nor_G(\ft)$.
It follows from the proof of \cite[Corollary 10.3.3]{S2} that,
for any $g\in \GL_2(\FF_p)$,
we can find some $\tilde{g}\in\Nor_G(\ft)$ such that $\tilde{g}|_\ft=g$, as required.
\end{proof}

\section{Generalized Witt algebra}

In this section, we turn to the computation of the toral stabilizer of the generalized Witt algebra $W(m,\underline{n})$ (The reader is referred to \cite{Ree} or \cite{W1},
\cite[\S 4.2]{SF} and \cite[7.6]{S1} for details). We assume that $p>3$.

We denote by $\mathfrak{L}:=W(m,\underline{n})$ the genelized Witt algebra, or also Witt algebra.
Let $A(m,\underline{n})$ be the divided power algebra in $m$ indeterminates.
By definition, $\mathfrak{L}$ consists of all special derivations of $A(m,\underline{n})$, where $\underline{n}\in\NN^m$ is a multi-index (see \cite[Page 146]{SF}).
It is a free $A(m,\underline{n})$-module generated by the partial derivatives $\partial_1,\dots,\partial_m$.
The length of $\underline{n}$ is defined to be $|\underline{n}|=n_1+\cdots+n_m$.
Let
$$A(|\underline{n}|,\underline{1}):=k[X_1,\cdots,X_{|\underline{n}|}]/(X_1^p,\cdots,X_{|\underline{n}|}^p)$$ be the truncated polynomial ring of $|\underline{n}|$ indeterminates.
Then $$W(|\underline{n}|,\underline{1}):=\Der(A(|\underline{n}|,\underline{1}))$$ is the $|\underline{n}|$-th Jacobson-Witt algebra.

Also, define the following distinguished multi-indices: for all $k\in\{1,\dots,m\}$, the $k$-th unit multi-index is $\epsilon_k=(\delta_{1,k},\dots,\delta_{m,k})$. One can check that the assignment:
$$(\forall i\in\{1,\dots,m\}),~(\forall j\in\{0,\dots,n_i-1\});~y_{i,j}\mapsto x^{(p^j\epsilon_i)}$$
extends uniquely to an isomorphism:
\begin{equation}
\phi: A(m,\underline{n})\cong k[\{y_{i,j}\}_{i,j}]\cong A(|\underline{n}|,\underline{1})
\end{equation}

Consequently, this results an isomorphism of Lie algebras:
\begin{equation}\label{1}
\Phi:\Der A(m,\underline{n})\rightarrow \Der A(|\underline{n}|;\underline{1})=W(|\underline{n}|,\underline{1});~D\mapsto \Phi(D),
\end{equation}
where $\Phi(D)(f)=\phi(D(\phi^{-1}(f))),~\forall f\in A(|\underline{n}|,\underline{1})$.

Let $\mathfrak{L}_p$ be the minimal $p$-envelop of $\fL$.
We call it restricted generalized Witt algebra.
In fact, $\fL_p$ coincides with the derivation algebra
$\Der W(m,\underline{n})$ (see \cite[Theorem 7.2.2]{S1}).
By a direct check, $\Phi$ is a restricted isomorphism.
Hence, we can identify $\fL_p$ as a restricted subalgebra of $W(|\underline{n}|,\underline{1})$.
Therefore, we have the following embedding map:
\begin{equation}\label{inclusion}
\iota:=\Phi|_{\fL_p}:\fL_p\hookrightarrow W(|\underline{n}|,\underline{1}).
\end{equation}
We denote by $G:=\Aut_p(\fL_p)$ (respectively $\tilde{G}:=\Aut_p(W(|\underline{n}|,\underline{1}))$)
the automorphism group of $\fL_p$ (respectively $W(|\underline{n}|,\underline{1})$).
It is well-known that $\tilde{G}$ is connected algebraic group. Moreover,
$G$ is a connected subgroup of $\tilde{G}$ (\cite{W1},\cite{W}).

Let $(\fg,[p])$ be a restricted Lie algebra, $G:=\Aut_p(\fg)^{\circ}$
be its connected automorphism group.
Following \cite{BFS}, a torus $\ft\in\Tor(\fg)$ is called generic
if the orbit $G.\ft$ is a dense subset of $\Tor(\fg)$.

Note that $\mu(\fL_p)=\mu(W(|\underline{n}|,\underline{1}))=|\underline{n}|$ (see \cite[Theorem 7.6.3]{S1} and \cite[Theorem 1]{D1}).
According to \cite[Theorem 7.6.2]{S1},
there exists a torus $\ft_0\in\Tor(\fL_p)$ such that $\iota(\ft_0)$
is a generic torus of $W(|\underline{n}|,\underline{1})$ (cf. \cite[Theorem 3.2]{BFS}).
By the map (\ref{inclusion}),
we have $\Tor(\fL_p)\subseteq\Tor(W(|\underline{n}|,\underline{1}))$.

\begin{Lemma}\label{generic torus of Zassenhaus}
Keep the notations as above.
Let $\fL_p:=W(m,\underline{n})_p$ be the restricted genelized Witt algebra.
Then $\ft_0$ is a generic torus of $\fL_p$.
\end{Lemma}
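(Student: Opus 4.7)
The plan is to transfer the known genericity of $\iota(\ft_0)$ in the Jacobson--Witt algebra $W(|\underline{n}|,\underline{1})$ to $\ft_0$ in $\fL_p$ by first computing the centralizer $\Cent_{\fL_p}(\ft_0)$ via the embedding $\iota$, and then performing a standard orbit-dimension count inside the irreducible variety $\Tor(\fL_p)$.

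The first main step is to show that $\ft_0$ is self-centralizing: $\Cent_{\fL_p}(\ft_0)=\ft_0$. Since $\iota(\ft_0)$ is a torus of maximal dimension in $W(|\underline{n}|,\underline{1})$ (recall $\mu(\fL_p)=\mu(W(|\underline{n}|,\underline{1}))=|\underline{n}|$), and since for the Jacobson--Witt algebra one has $\rk(W(|\underline{n}|,\underline{1}))=|\underline{n}|=\mu$ so that generic tori are self-centralizing, the hypothesis on $\iota(\ft_0)$ gives $\Cent_{W(|\underline{n}|,\underline{1})}(\iota(\ft_0))=\iota(\ft_0)$. Intersecting with the restricted subalgebra $\iota(\fL_p)$ then yields $\iota(\Cent_{\fL_p}(\ft_0))=\iota(\ft_0)$, hence $\Cent_{\fL_p}(\ft_0)=\ft_0$.

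The second step is a dimension count. Because $\ft_0$ is a self-centralizing torus, it is a Cartan subalgebra of $\fL_p$ of dimension $\mu(\fL_p)=|\underline{n}|$; as any Cartan subalgebra of $\fL_p$ contains a maximal torus of $\fL_p$, this forces $\rk(\fL_p)=|\underline{n}|$, and therefore $\dim\Tor(\fL_p)=\dim_k\fL_p-|\underline{n}|$. On the other hand, $\dim G.\ft_0=\dim G-\dim\Nor_G(\ft_0)=\dim G-\dim\Cent_G(\ft_0)$, using that $\Nor_G(\ft_0)/\Cent_G(\ft_0)$ embeds in the finite group $\Aut_p(\ft_0)$. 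Invoking $\Lie(G)=\fL_p$ (from the identification $\fL_p=\Der W(m,\underline{n})$ together with the structure of the automorphism group, cf.\ \cite{W1}, \cite{W}) and the smoothness of $\Cent_G(\ft_0)$ (automatic because the $\ft_0$-action on $G$ by conjugation is semisimple), we get $\dim\Cent_G(\ft_0)=\dim_k\Cent_{\fL_p}(\ft_0)=|\underline{n}|$. Hence $\dim G.\ft_0=\dim_k\fL_p-|\underline{n}|=\dim\Tor(\fL_p)$, and since $\Tor(\fL_p)$ is irreducible the orbit $G.\ft_0$ is dense, so $\ft_0$ is generic.

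The main obstacle I foresee is not the algebraic computation of the centralizer (that part is clean once one knows the Jacobson--Witt algebra is self-centralizing at a generic torus), but rather the two Lie-theoretic identifications used in the orbit-dimension count, namely $\Lie(G)=\fL_p$ and $\Lie(\Cent_G(\ft_0))=\Cent_{\fL_p}(\ft_0)$. Both are standard for connected automorphism groups of Witt-type algebras, but should be stated explicitly with appropriate citations; if one wishes to avoid the identification $\Lie(G)=\fL_p$ altogether, an equivalent route is to invoke directly the characterization of genericity in terms of minimal centralizer dimension given in \cite[Theorem 3.2]{BFS} to conclude from $\Cent_{\fL_p}(\ft_0)=\ft_0$ alone.
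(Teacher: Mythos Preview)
Your approach is genuinely different from the paper's, and your first step is clean: transferring self-centralization of $\iota(\ft_0)$ from $W(|\underline{n}|,\underline{1})$ to $\fL_p$ is correct and immediately yields $\rk(\fL_p)=|\underline{n}|$.

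The second step, however, rests on a false identity. The claim $\Lie(G)=\fL_p$ does not hold: for the connected automorphism group of a Witt-type algebra one has $\Lie(G)$ equal to the \emph{filtration-preserving} derivations, not all of $\Der(\fL)$. Already for the restricted case $\fg=W(n,\underline{1})$ one has $G\cong\Aut(B_n)$ with $\Lie(G)=(W_n)_{\ge 0}$ of dimension $n(p^n-1)$, strictly smaller than $\dim W_n=np^n$; the derivations $\partial_i$ do not integrate (the substitution $x_i\mapsto x_i+t$ is not an algebra endomorphism of $B_n$). So the equality $\dim G=\dim_k\fL_p$ used in your orbit count is off by exactly $|\underline{n}|$, and the subsequent computation collapses. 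Your justification of smoothness of $\Cent_G(\ft_0)$ is also problematic: $\ft_0$ is a Lie-algebra torus, not a priori the Lie algebra of a subtorus of $G$, so the usual argument for smoothness of centralizers of diagonalizable group actions does not apply directly. Finally, the fallback via \cite[Theorem 3.2]{BFS} is circular here: a characterization of the form ``generic $\Leftrightarrow$ minimal centralizer dimension'' presupposes that $G$ acts with a dense orbit on $\Tor(\fL_p)$, which is precisely what is to be proved.

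By contrast, the paper avoids any appeal to $\Lie(G)$ or smoothness. It works inside the ambient variety $\Tor(W(|\underline{n}|,\underline{1}))$, where $\tilde{G}.\ft_0$ is already known to be dense, and shows by an explicit coordinate computation (comparing coefficients of the $D_{s,j}$ against the image $\iota(\partial_i)$) that any $\tilde{g}\in\tilde{G}$ with $\tilde{g}.\ft_0\in\Tor(\fL_p)$ must send each $\iota(\partial_i)$ into $\sum_s A(m,\underline{n})\,\iota(\partial_s)$, hence lies in $G$ by Ree--Wilson. Thus $\tilde{G}.\ft_0\cap\Tor(\fL_p)=G.\ft_0$ is open (and nonempty) in the irreducible variety $\Tor(\fL_p)$, giving density. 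If you want to rescue your dimension-count strategy, the correct inputs are $\Cent_G(\ft_0)=\{1\}$ (which follows from \cite[Lemma~2]{P92} via $G\subseteq\tilde{G}$) together with an independent computation of $\dim G$ from Wilson's description of $\Aut(W(m,\underline{n}))$; but that last step is essentially as much work as the paper's direct argument.
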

\begin{proof}
By virtue of \cite[Lemma 7.3]{Fa04}, it is easy to see that
$\Tor(\fL_p)\subseteq\Tor(W(|\underline{n}|,\underline{1}))$ is a closed subvariety.
By the above , we know that
$$\overline{\tilde{G}.\ft_0}=\Tor(W(|\underline{n}|,\underline{1})).$$
Since both of them are irreducible varieties (see Section \ref{Section 1}),
it is enough to show that
$$\tilde{G}.\ft_0\cap\Tor(\fL_p)=G.\ft_0,$$
where $G:=\Aut_p(\fL_p)$ is the automorphism group of $\fL_p$.
As $G\subseteq\tilde{G}$, one side is obvious.
We need to prove that
$$\tilde{G}.\ft_0\cap\Tor(\fL_p)\subseteq G.\ft_0.$$
We observe that, setting
$$S(i):=\sum_{\alpha<p^{n_i}}kx_i^{(\alpha)}\partial_i\cong W(1,\underline{n_i});~~1\leq i\leq m.$$

Thanks to \cite[Theorem 7.6.2]{S1} and the proof \cite[Theorem 7.6.3(2)]{S1}, we can assume that $\ft_0$ is generated by the elements of the form $$\partial_i+u_i\partial_i,~u_i\partial_i\in S(i)_{(0)},~1\leq i\leq m,$$
where $S(i)_{(0)}$ is the filtration $0$ part of $S(i)$.
Suppose that there exists an element $\tilde{g}\in\tilde{G}$ such that $\tilde{g}.\ft_0\in\Tor(\fL_p)$.
For any $1\leq i\leq m$,
we say that $$\tilde{g}.(\partial_i+u_i\partial_i)=\sum_{s=1}^{m}f_{i,s}\partial_s,$$
where $f_{i,s}\in A(m,\underline{n})$.
By (\ref{1}), we know that there exists an element $\varphi\in\Aut(A(|\underline{n}|,1)$
such that the corresponding automorphism is $\tilde{g}$. Thus,
\begin{equation}\label{2}
\sum_{s=1}^{m}\phi(f_{i,s})\mathcal{D}_s=\varphi(\phi(1+u_i))\tilde{g}(\mathcal{D}_i),
\end{equation}
where $\mathcal{D}_i=\iota(\partial_i)=D_{i,0}+\sum\limits_{j=1}^{n_i-1}(-1)^{j}y_{i,0}^{p-1}\cdots y_{i,j-1}^{p-1}D_{i,j}$ and $D_{i,j}$ is the partial derivation with respect to $y_{i,j}$.
Suppose that
$$\tilde{g}(\mathcal{D}_i)=\sum\limits_{k,l}a_{k,l}^iD_{k,l},~a_{k,l}^i\in A(|\underline{n}|,\underline{1}).$$
By comparing the coefficients of $D_{k,l}$ in (\ref{2}),
we obtain:
\begin{equation}\label{3}
a_{s,0}^i\varphi(\phi(1+u_i))=\phi(f_{i,s}).
\end{equation}
\begin{equation}\label{4}
a_{s,j}^i\varphi(\phi(1+u_i))=\phi(f_{i,s})(-1)^{j}y_{s,0}^{p-1}\cdots y_{s,j-1}^{p-1};~1\leq j\leq n_s-1.
\end{equation}
Since $\varphi(\phi(1+u_i))$ is invertible in $A(|\underline{n}|,\underline{1})$,
it follows from (\ref{3}) and (\ref{4}) that
\begin{equation}
a_{s,j}^i=(-1)^{j}y_{s,0}^{p-1}\cdots y_{s,j-1}^{p-1}a_{s,0}^i;~1\leq j\leq n_s-1.
\end{equation}
Consequently, 
$$\tilde{g}(\iota(\partial_i))=\sum_{s=1}^{m}a_{s,0}^i\iota(\partial_s).$$
Hence,
$\tilde{g}\in G$ (cf. \cite[Theorem 12.8]{Ree}, \cite[Theorem 2]{W1}).
Last, it should be notice that the intersection $\tilde{G}.\ft_0\cap\Tor(\fL_p)$
is a non-empty open set of $\Tor(\fL_p)$.
Hence, the orbit $G.\ft_0$ is dense in $\Tor(\fL_p)$ and $\ft_0$ is a generic torus of $\fL_p$.
\end{proof}

The main result of this section reads:

\begin{Theorem}\label{S(g)Zassenhaus}
Let $\fL_p:=W(m,\underline{n})_p$ be the restricted genelized Witt algebra.
Then there is an isomorphism
$$S(\fL_p)\cong\GL_{|\underline{n}|}(\FF_p).$$
\end{Theorem}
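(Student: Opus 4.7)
Plan. The strategy is to translate the computation of $S(\fL_p)$ into a computation of a Weyl-type quotient $\Nor_G(\ft_0)/\Cent_G(\ft_0)$ with $G := \Aut_p(\fL_p)$, and then transfer the corresponding result for the ambient Witt algebra $W(|\underline{n}|,\underline{1})$ along the restricted embedding $\iota$. Two ingredients drive the argument: the genericity of $\ft_0$ already provided by Lemma \ref{generic torus of Zassenhaus} (giving the identification of $S(\fL_p,\ft_0)$ with a normalizer--centralizer quotient via \cite[Theorem 1.5]{BFS}, in the spirit of Lemma \ref{toralstablizerofh21phitau}), and the rigidity statement displayed inside the proof of Lemma \ref{generic torus of Zassenhaus}, which says that any $\tilde g \in \tilde G := \Aut_p(W(|\underline{n}|,\underline{1}))$ with $\tilde g(\ft_0) \in \Tor(\fL_p)$ already lies in $G$.

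First, by Lemma \ref{generic torus of Zassenhaus} together with \cite[Theorem 1.5]{BFS},
$$S(\fL_p,\ft_0) \;\cong\; \Nor_G(\ft_0)/\Cent_G(\ft_0),$$
and the right-hand side embeds canonically into $\Aut_p(\ft_0)=\GL_{|\underline{n}|}(\FF_p)$ via the adjoint action on $\ft_0$. It therefore suffices to show that every $g \in \GL_{|\underline{n}|}(\FF_p)$ is realised by some element of $\Nor_G(\ft_0)$.

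Fix such a $g$. The torus $\iota(\ft_0)$ is generic in $W(|\underline{n}|,\underline{1})$ and $S(W(|\underline{n}|,\underline{1})) \cong \GL_{|\underline{n}|}(\FF_p)$ by Lemma \ref{S(g)cartan type}. A second application of \cite[Theorem 1.5]{BFS} then gives $\Nor_{\tilde G}(\iota(\ft_0))/\Cent_{\tilde G}(\iota(\ft_0)) \cong \GL_{|\underline{n}|}(\FF_p)$, so we may lift $g$ to some $\tilde n \in \Nor_{\tilde G}(\iota(\ft_0))$ with $\tilde n|_{\iota(\ft_0)}=g$. Since $\tilde n$ preserves $\iota(\ft_0)$ setwise, $\tilde n(\ft_0)=\iota(\ft_0)\in \iota(\Tor(\fL_p))$, and the argument inside the proof of Lemma \ref{generic torus of Zassenhaus} --- which shows that any $\tilde h \in \tilde G$ sending $\ft_0$ into $\Tor(\fL_p)$ lies in $G$ --- forces $\tilde n \in G$. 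Hence $\tilde n \in \Nor_G(\ft_0)$ realises $g$, and surjectivity follows, giving $S(\fL_p)=S(\fL_p,\ft_0)\cong \GL_{|\underline{n}|}(\FF_p)$.

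The main obstacle is this last step. Its validity rests on the explicit form of $\ft_0$ (with generators $\partial_i + u_i\partial_i$) and the coefficient analysis carried out in the proof of Lemma \ref{generic torus of Zassenhaus}: that analysis concludes for any $\tilde h \in \tilde G$ with $\tilde h(\ft_0) \in \Tor(\fL_p)$, and our $\tilde n$ satisfies exactly that hypothesis, so it applies verbatim. As a sanity check, one obtains the opposite inclusion $S(\fL_p,\ft_0) \hookrightarrow \GL_{|\underline{n}|}(\FF_p)$ for free from Lemma \ref{lemma2} applied to the $p$-subalgebra $\iota(\fL_p)\subseteq W(|\underline{n}|,\underline{1})$ together with Lemma \ref{S(g)cartan type}, so the only real content is the surjectivity just described.
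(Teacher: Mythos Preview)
Your proof is correct and follows essentially the same route as the paper's: both identify $S(\fL_p,\ft_0)$ with $\Nor_G(\ft_0)/\Cent_G(\ft_0)$ via genericity and \cite[Theorem 1.5]{BFS}, and both obtain the isomorphism by showing $\Nor_G(\ft_0)=\Nor_{\tilde G}(\ft_0)$ using the rigidity step from the proof of Lemma~\ref{generic torus of Zassenhaus}. The only cosmetic difference is that the paper cites \cite[Lemma~2, Theorem~1(i)]{P92} directly for the triviality of $\Cent_{\tilde G}(\ft_0)$ and for $\Nor_{\tilde G}(\ft_0)\cong\GL_{|\underline{n}|}(\FF_p)$, whereas you route the latter through a second application of \cite[Theorem 1.5]{BFS} together with Lemma~\ref{S(g)cartan type}.
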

\begin{proof}
Let $G:=\Aut_p(\fL_p)$ be the connected automorphism of $\fL_p$.
Since $\ft_0$ is a generic torus of $\fL_p$ (Lemma \ref{generic torus of Zassenhaus}),
it follows from \cite[Theorem 1.5]{BFS} that there is an
isomorphism
$$S(\fL_p,\ft_0)\cong\Nor_G(\ft_0)/\Cent_G(\ft_0).$$
Note that $G$ is a subgroup of $\tilde{G}$.
It follows from \cite[Lemma 2]{P92} that the centralizer $\Cent_{\tilde{G}}(\ft_0)$ is a trivial group,
so is $\Cent_{G}(\ft_0)$.
Thus, there is a natural inclusion map
\begin{equation}\label{5}
\Nor_G(\ft_0)\hookrightarrow\Nor_{\tilde{G}}(\ft_0).
\end{equation}
We claim that (\ref{5}) is also surjective.
Let $\tilde{g}\in\Nor_{\tilde{G}}(\ft_0)$,
so that $\tilde{g}.\ft_0=\ft_0\in\Tor(\fL_p)$.
Now the proof of Lemma \ref{generic torus of Zassenhaus}
ensures that $\tilde{g}\in G$, as desired.

Hence,
$$S(\fL_p)\cong\Nor_G(\ft_0)=\Nor_{\tilde{G}}(\ft_0)\cong\GL_{|\underline{n}|}(\FF_p),$$
where the last isomorphism follows from \cite[Theorem 1(i)]{P92} (see also Lemma \ref{S(g)cartan type}).
\end{proof}

\section{Solvable Lie algebra}\label{sectionmainthm}
In this section, we would like to give a new characterization of the solvable restricted Lie algebras.

\begin{Theorem}\cite[Theorem 6.2]{Fa04}.\label{rolftheorem}
Let $(\fg,[p])$ be a solvable,
restricted Lie algebra.
Then $S(\fg)$ is a $p$-group.
\end{Theorem}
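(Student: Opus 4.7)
The plan is to proceed by induction on $\dim_k\fg$, using Lemma~\ref{lemma1}(4) as the main reduction tool. The abelian case serves as the base: $\fg$ is its own Cartan subalgebra and decomposes as the direct sum of its unique maximal torus $\ft$ and its set of $p$-nilpotent elements, so every injective restricted homomorphism $\ft\hookrightarrow\fg$ has image equal to $\ft$ and is in fact an automorphism of $\ft$. Consequently $\cT_\fg(k)$ is zero-dimensional with components in bijection with $\Aut_p(\ft)$, the component $\cX_\ft(k)$ collapses to the single point given by the inclusion, and $S(\fg,\ft)=\{\id\}$ is a trivial $p$-group.

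For the inductive step, suppose $\fg$ is non-abelian solvable. I would extract a proper nonzero abelian $p$-ideal $\fh\unlhd\fg$. Then both $\fh$ and $\fg/\fh$ are solvable restricted Lie algebras of strictly smaller dimension, so the induction hypothesis gives that $S(\fh,\ft\cap\fh)$ and $S(\fg/\fh,\ft')$ are $p$-groups, where $\ft=(\ft\cap\fh)\oplus\ft'$. Lemma~\ref{lemma1}(4) then embeds $S(\fg,\ft)$ into
\[
M:=\begin{pmatrix} S(\fh,\ft\cap\fh) & \Lie_p(\ft',\ft\cap\fh) \\ 0 & S(\fg/\fh,\ft')\end{pmatrix}.
\]
Since restricted homomorphisms between tori form an $\FF_p$-vector space under addition, the off-diagonal block is an elementary abelian $p$-group, and $M$ is an extension of the $p$-group $S(\fh,\ft\cap\fh)\times S(\fg/\fh,\ft')$ by an elementary abelian $p$-group. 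Hence $M$, and therefore $S(\fg,\ft)$, is a $p$-group.

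To manufacture $\fh$ I would let $n\geq 1$ be maximal with $\fg^{(n)}\neq 0$; such $n$ exists by solvability, and $\fg^{(n)}$ is then a nonzero abelian ideal properly contained in $\fg$ (proper because $\fg$ is non-abelian). Taking $\fh$ to be the $p$-subalgebra of $\fg$ generated by $\fg^{(n)}$, the main technical check is that $\fh$ remains an abelian ideal: the identity $[x^{[p]},y]=\ad(x)^p(y)$, together with $\ad(\fg)\fg^{(n)}\subseteq\fg^{(n)}$ and the abelianness of $\fg^{(n)}$, shows inductively that each iterated $p$-th power subspace of $\fg^{(n)}$ is an abelian ideal centralizing $\fg^{(n)}$, so $\fh$ is an abelian $p$-ideal, and it remains proper since it is abelian while $\fg$ is not. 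This verification is the one place where the restricted structure interacts nontrivially with the derived series, and constitutes the principal technical obstacle.
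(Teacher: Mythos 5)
Your argument is correct, and it is essentially the argument of \cite[Theorem 6.2]{Fa04}, which this paper cites without reproducing: induction on $\dim_k\fg$, reduction modulo a nonzero abelian $p$-ideal, and the observation that the triangular embedding of Lemma \ref{lemma1}(4) lands in an extension of $S(\fh,\ft\cap\fh)\times S(\fg/\fh,\ft')$ by the elementary abelian $p$-group $\Lie_p(\ft',\ft\cap\fh)\cong\Hom_{\FF_p}(\FF_p^{b},\FF_p^{a})$. The only cosmetic difference is your choice of ideal (the $p$-envelope of the last nonvanishing term of the derived series, correctly checked to be abelian since $\fa^{[p]}$ is central when $\fa$ is an abelian ideal) versus Farnsteiner's use of a minimal abelian $p$-ideal, which is necessarily elementary abelian or toral and is handled by parts (1) and (2) of Lemma \ref{lemma1}; both routes are sound.
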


Recall that the classification of finite-dimensional simple Lie algebras
of absolute toral rank two over an algebraically closed field of characteristic
$p>3$ (see \cite[Theorem, Page 2]{S2}, also \cite[Theorem 1.1]{PS3}).

\begin{Theorem}\label{classification}
Let $\fL$ be a finite-dimensional simple Lie algebra
of absolute toral rank two over an algebraically closed field of characteristic
$p>3$. Then $\fL$ is isomorphic to one of the Lie algebras listed below:
\begin{enumerate}
\item[(I)] classical type $A_2,B_2$ or $G_2$.
\item[(II)] the restricted Lie algebras $W(2;\underline{1}), S(3;\underline{1})^{(1)},H(4;\underline{1})^{(1)},K(3,\underline{1})$.
\item[(III)] non-restricted Cartan type Lie algebras:
\begin{enumerate}
\item[(a)] $W(1;\underline{2}),H(2;\underline{1};\Phi(1))$;
\item[(b)] $H(2;(1,2))^{(2)}$;
\item[(c)] $H(2;\underline{1};\Phi(\tau))^{(1)}$,
\end{enumerate}
\item[(IV)] the Melikian algebra $\cM(1,1),p=5$.
\end{enumerate}
\end{Theorem}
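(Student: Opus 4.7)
The plan is to take as input the full classification program of Premet--Strade for finite-dimensional simple Lie algebras in characteristic $p>3$, and to narrow down which of those algebras have absolute toral rank equal to two. Since the toral-rank-two classification is in fact the crucial intermediate step in the inductive proof of the general classification, I would follow the original Premet--Strade strategy rather than invoke the full classification as a black box.

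First, fix a simple Lie algebra $\fL$ of absolute toral rank two and choose a two-dimensional torus $\ft$ in the $p$-envelope $\fL_p$. The root space decomposition
$$\fL=\fL^{\ft}\oplus\bigoplus_{\alpha\in\ft^*\setminus\{0\}}\fL_\alpha$$
reduces the analysis to the $1$-sections $\fL[\alpha]:=\fL^{\ft}\oplus\bigoplus_{t\in\FF_p^\times}\fL_{t\alpha}$. Each such $1$-section, modulo its solvable radical, has absolute toral rank at most one, hence by the Block--Wilson--Premet classification of toral-rank-one simple Lie algebras it is either $\fsl_2$ or a Zassenhaus algebra $W(1;\underline{n})$. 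Enumerating which combinations of $1$-sections can occur already narrows the global possibilities sharply.

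Next I would analyze how the $1$-sections fit together through the $2$-sections, controlled by the Kac--Weisfeiler filtration theorem: the associated graded algebra of $\fL$ with respect to a suitably chosen admissible filtration is semisimple of Cartan type, and the possible graded types are then pinned down by the constraint $|\underline{n}|=2$. Nondegenerate $2$-sections yield the classical rank-two algebras $A_2$, $B_2$, $G_2$, giving case (I). The degenerate cases produce $W(2;\underline{1})$, $S(3;\underline{1})^{(1)}$, $H(4;\underline{1})^{(1)}$ and $K(3;\underline{1})$ in (II), together with their non-graded filtered deformations $W(1;\underline{2})$ and the twisted Hamiltonians $H(2;\underline{1};\Phi(1))$, $H(2;(1,2))^{(2)}$, $H(2;\underline{1};\Phi(\tau))^{(1)}$ in (III). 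Finally, for $p=5$ a separate recognition argument produces the Melikian algebra $\cM(1,1)$ in (IV).

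The main obstacle is precisely the recognition step leading to (III): distinguishing the filtered Hamiltonian deformations from each other and from their graded counterparts. This is where the original proofs in \cite{PS3} and \cite[Chapter 10]{S2} invest most of their effort, using detailed computations of $H^2(\fg_0,\fg_0)$ for graded $\fg_0$ of Cartan type together with the classification of admissible gradings and compatible derivations. In practice I would quote these recognition theorems rather than rederive them, since they form the technical core of the whole Premet--Strade classification and can reasonably be regarded as prerequisites to any toral-rank-two analysis.
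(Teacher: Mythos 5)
The paper offers no proof of this statement: it is recalled as the Premet--Strade classification of simple Lie algebras of absolute toral rank two, with references to \cite[Theorem, p.~2]{S2} and \cite[Theorem 1.1]{PS3}, and is then used as a black box in the proof of Theorem \ref{maintheoremsolvable}. Your proposal --- a sketch of the actual Premet--Strade argument via $1$-sections, the Weisfeiler filtration and the recognition theorems, with the technical core quoted from the same sources --- is therefore an entirely reasonable way to handle the statement, and in substance it does the same thing the paper does, only with more of the internal architecture made visible. One correction to the sketch: for $p>3$ the simple Lie algebras of absolute toral rank one are $\fsl_2$, $W(1;\underline{1})$ \emph{and} $H(2;\underline{1})^{(2)}$ (this is precisely the list the paper invokes from \cite[Theorem 10.6.1]{S2} in the rank-one step of Theorem \ref{maintheoremsolvable}); your $1$-section analysis omits the Hamiltonian possibility, and it is exactly the $1$-sections of Hamiltonian type that give rise to the filtered deformations listed in (III). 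Since you defer the recognition step to \cite{PS3} and \cite{S2} anyway, this slip does not derail the argument, but a self-contained write-up would need the corrected rank-one list.
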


\begin{Theorem}\label{maintheoremsolvable}
Suppose that $p>3$.
Let $(\fg,[p])$ be a finite-dimensional restricted Lie algebra.
Then $\fg$ is solvable if and only if the toral stabilizer $S(\fg)$ is a $p$-group.
\end{Theorem}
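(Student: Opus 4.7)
The forward implication is Theorem \ref{rolftheorem}. For the converse, I argue the contrapositive: assuming $\fg$ is not solvable, I shall produce an element of $S(\fg)$ of order coprime to $p$. The proof is by induction on $\dim_k \fg$ and reduces to the case where $\fg$ contains a simple subalgebra of absolute toral rank at most $2$, which is then handled case by case via Theorem \ref{classification}.

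First I carry out two reductions via Lemma \ref{lemma1}. If $\fg$ has a nonzero elementary abelian $p$-ideal $\fa$, Lemma \ref{lemma1}(1) yields $S(\fg) \cong S(\fg/\fa)$, and since $\fa$ is solvable, $\fg/\fa$ remains non-solvable, permitting induction. If $\fg$ has a nonzero toral $p$-ideal $\fz$, Lemma \ref{lemma1}(2) gives a surjection $S(\fg) \twoheadrightarrow S(\fg/\fz)$, and a non-$p$-element downstairs pulls back to one in $S(\fg)$. Iterating, I may assume $\fg$ has no nonzero elementary abelian or toral $p$-ideal. This forces $\Rad(\fg)=0$: otherwise the nilpotent radical $\fn\unlhd\fg$ is nonzero, and for $a\in Z(\fn)$ the operator $\ad(a)$ sends $\fg$ into $\fn$ and vanishes on $\fn$, so $\ad(a)^2=0$ and $Z(\fn)^{[p]}\subseteq Z(\fg)$. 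The canonical $p$-Jordan decomposition of the abelian $p$-ideal $Z(\fg)$ then produces either a nonzero toral $p$-ideal (its toral part) or a nonzero elementary abelian $p$-ideal (the bottommost nonzero iterated $[p]$-image), contradicting the hypothesis. Hence $\fg$ is semisimple.

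Being semisimple, $\fg$ contains a simple Lie subalgebra $\fs$ (not necessarily restricted), arising from the socle. If the absolute toral rank of $\fs$ exceeds $2$, I replace it by a simple subalgebra of toral rank $2$, whose existence is ensured by standard structural arguments (sub-root systems in classical type; analogous descent in Cartan type). Let $\fs_p\subseteq\fg$ be the minimal $p$-envelope of $\fs$. Theorem \ref{classification} enumerates the possibilities for $\fs$, and the toral stabilizer $S(\fs_p)$ is read off from the preceding results: a nontrivial Weyl group of order $6,8,$ or $12$ in the classical cases $A_2,B_2,G_2$; a copy of $\GL_{\mu(\fs_p)}(\FF_p)$ in the Cartan-type cases (II) and (III) via Lemma \ref{S(g)cartan type}, Theorem \ref{S(g)Zassenhaus}, and Lemma \ref{toralstablizerofh21phitau}; and $\GL_2(\FF_5)$ in the Melikian case (IV) via Lemma \ref{S(g)melikian}. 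In every instance, $|S(\fs_p)|$ is divisible by $p-1\geq 4$, so $S(\fs_p)$ admits an element of order coprime to $p$. Choosing $\ft\in\Tor(\fg)$ with $\ft\cap\fs_p\in\Tor(\fs_p)$ and invoking Lemma \ref{lemma2}, I embed $S(\fs_p,\ft\cap\fs_p)$ into $S(\fg,\ft)$ and transport the non-$p$-element upward, completing the argument.

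The principal obstacle is this last transport: verifying Lemma \ref{lemma2}'s hypothesis $\ft\cap\fs_p\in\Tor(\fs_p)$ for a suitable pair $(\ft,\fs_p)$. Both the choice of the simple section (so that its maximal tori extend to those of $\fg$) and the descent from higher toral rank to rank $2$ are delicate, relying on the detailed structure of the simple Lie algebras enumerated in Theorem \ref{classification}.
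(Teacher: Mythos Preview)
Your overall strategy parallels the paper's, but two steps are genuine gaps rather than technicalities, and the paper's argument is structured precisely to avoid them.

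First, your case analysis is incomplete. You assert that in the Cartan-type cases (II) and (III) one has $S(\fs_p)\cong\GL_{\mu(\fs_p)}(\FF_p)$ ``via Lemma \ref{S(g)cartan type}, Theorem \ref{S(g)Zassenhaus}, and Lemma \ref{toralstablizerofh21phitau}.'' But Lemma \ref{S(g)cartan type} treats only the \emph{restricted} Cartan types (II), Theorem \ref{S(g)Zassenhaus} handles $W(1;\underline{2})$, and Lemma \ref{toralstablizerofh21phitau} handles $H(2;\underline{1};\Phi(\tau))^{(1)}$. The remaining two algebras in (III), namely $H(2;\underline{1};\Phi(1))$ and $H(2;(1,2))^{(2)}$, are not covered by any of these results, and the paper does \emph{not} compute their toral stabilizers. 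Instead it argues indirectly: for $H(2;(1,2))^{(2)}$ one uses the filtration subalgebra $\fL_{(0)}$, checks via \cite[Lemma 10.2.3]{S2} that a maximal torus of $\fg$ meets $\fL_{(0)}$ in a maximal torus, applies Lemma \ref{lemma2} and the minimality hypothesis to force $\fL_{(0)}$ solvable, and then derives a contradiction from $\fL_{(0)}/\fL_{(1)}\cong\mathfrak{sl}_2$. The case $H(2;\underline{1};\Phi(1))$ is handled analogously.

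Second, and more seriously, your descent from arbitrary toral rank to rank $\le 2$ via ``a simple subalgebra of toral rank $2$'' is not justified, and the torus-compatibility condition of Lemma \ref{lemma2} that you flag as the ``principal obstacle'' is indeed unresolved. The paper circumvents both problems simultaneously by a different device: working with a \emph{minimal} non-solvable counterexample $\fg$, it first reduces (via \cite[Lemma 6.3]{FSW}) to $\fg=\fL_p$ with $\fL$ simple, and then passes to a $2$-section $\fh$ of $\fg$ relative to $\ft\in\Tor(\fg)$. By construction $\ft\subseteq\fh$, so $\mu(\fh)=\mu(\fg)$ and Lemma \ref{lemma1}(3) applies with no compatibility check needed. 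If every $2$-section were solvable, $\fg$ itself would be solvable \cite[Theorem 1.3.10]{S1}; hence some $2$-section is non-solvable, and minimality forces $\fg=\fh$, i.e.\ $\mu(\fg)\le 2$. This reduces directly to the classification without ever needing to locate a rank-$2$ simple subalgebra or to match tori across an inclusion.
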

\begin{proof}
One implication follows from Theorem \ref{rolftheorem},
so that the converse has to be proved.

Suppose there exists a non-solvable restricted Lie algebra $\fg$
such that $S(\fg)$ is a $p$-group.
Furthermore, we assume that $\fg$ has minimal dimension.

First, we claim that $\fg$ is semisimple.
Let $\fa\unlhd\fg$ be a non-zero abelian $p$-ideal of minimal dimension.
Then $\fa^{[p]}\subseteq\fz(\fg)$ is also a $p$-ideal of $\fg$,
where $\fz(\fg)$ is the center of $\fg$. Hence, we have $\fa^{[p]}=0$
or $\fa^{[p]}=\fa$, so that $\fa$ is an elementary abelian ideal or
a toral $p$-ideal. Thanks to Lemma \ref{lemma1},
the toral stabilizer $S(\fg/\fa)$ is a $p$-group.
By induction, $\fg/\fa$ is solvable,
it follows that $\fg$ is solvable. This is a contradiction,
thus, $\fg$ is semisimple.

Furthermore, we claim that $\fg$ has no non-zero proper $p$-ideal.
Let $\fn\unlhd\fg$ be a proper $p$-ideal and $\ft\in\Tor(\fg)$.
We know that
the $p$-subalgebra $\ft\cap\fn$ is a torus of $\fn$ of maximal dimension,
i.e., $\ft\cap\fn\in\Tor(\fn)$.
Thanks to Lemma \ref{lemma2},
we have $S(\fn)$ is a $p$-group.
By the induction hypothesis, this implies that $\fn$
is a solvable ideal. We arrive at a contradiction since we have shown that $\fg$ is semisimple.

Thanks to \cite[Lemma 6.3]{FSW},
the derived subalgebra $[\fg,\fg]$ of $\fg$ is a simple Lie algebra and $\fg$ is a minimal $p$-envelope of $[\fg,\fg]$.
For convenience, from now on, we denote by $\fL:=[\fg,\fg]$ and $\fg=\fL_p$.

We first discuss the case that $\mu(\fg)=1$.
By virtue of \cite[Theorem 10.6.1]{S2},
$\fL$ is isomorphic to one of
$$\mathfrak{sl}_2, W(1;\underline{1}), H(2;\underline{1})^{(2)}.$$
According to Lemma \ref{S(g)cartan type} (see also the proof of \cite[Corollary 6.6]{Fa04}),
the group $S(\fg)$ is not a $p$-group,
which is a contradiction.

Let $\fh\subseteq\fg$ be any $2$-section of $\fg$ with respect to a torus $\ft\in\Tor(\fg)$ (cf.\cite[Definition 1.3.9]{S1}).
By definition, $\ft\subseteq\fh$, we have $\mu(\fh)=\mu(\fg)$.
It follows from Lemma \ref{lemma1}(3) that $S(\fh)\subseteq S(\fg)$
is a subgroup, so that $S(\fh)$ is a $p$-group.
The minimality implies that $\fg=\fh$ or $\fh$ is solvable.
If all $2$-sections of $\fg$ are solvable, then $\fg$ is solvable (see \cite[Theorem 1.3.10]{S1}).
Consequently, $\fg=\fh$ and $\mu(\fg)=2$ (\cite[Theorem 1.3.11(2)]{S1}).

According to the classification result of simple Lie algebras by Premet and Strade (Theorem \ref{classification}).
We proceed by a case-by-case analysis.
Note that if $\fL$ is restricted, then $\fg=\fL$.
In these case,
we can conclude that $S(\fg)$ is not a $p$-group
(the proof of \cite[Corollary 6.6]{Fa04} for classical type, Lemma \ref{S(g)cartan type}
for Cartan type and Lemma \ref{S(g)melikian} for Melikian algebra).

Assume that $\fL\cong W(1;\underline{2})$.
Then there exists a torus $\ft\in\Tor(\fg)$ such that
every $1$-section is isomorphic to the restricted
Witt algebra $W(1;\underline{1})$ (see \cite[Theorem 7.6.2]{S1}).
It follows from Lemma \ref{lemma2} that
$S(W(1;\underline{1}))$ can be identified with a subgroup of $S(\fg)$.
Since $S(W(1;\underline{1}))=\GL_1(\FF_p)=\FF_p^{*}$ (Lemma \ref{S(g)cartan type}),
this ensures that $S(\fg)$ is not a $p$-group.
We obtain a contradiction.

Now, we consider the case $\fL\cong H(2;(1,2))^{(2)}$.
Let $\ft\in\Tor(\fg)$.
It follows from \cite[Lemma 10.2.3]{S2} that $\dim_k(\ft\cap\fL)=1$
and $\fg=\ft+\fL$. Furthermore, there exists a proper $p$-subalgebra
$\fL_{(0)}$ and a torus $\ft\in\Tor(\fg)$ such that $\ft\cap\fL_{(0)}\in\Tor(\fL_{(0)})$ (see \cite[Page 39]{S2}).
Now Lemma \ref{lemma2} implies that the toral stabilizer $S(\fL_{(0)})$ is a $p$-group.
By the induction, $\fL_{(0)}$ is solvable.
We obtain a contradiction since there is a subalgebra $\fL_{(1)}\subseteq \fL_{(0)}$ such that
$\fL_{(0)}/\fL_{(1)}\cong\mathfrak{sl}_2$ (see the proof of \cite[Lemma 10.2.3]{S2}).
If $\fL\cong H(2;\underline{1};\Phi(1))$,
then the proof is similar to the case $\fL\cong H(2;(1,2))^{(2)}$ (see \cite[Theorem 10.4.6]{S2} and \cite[Theorem 16.2.5]{S3}).

Thus we must have $\fL\cong H(2;\underline{1};\Phi(\tau))^{(1)}$.
According to Lemma \ref{toralstablizerofh21phitau},
the toral stabilizer $S(\fg)$ is not a $p$-group, which is a contradiction.
\end{proof}

\begin{Corollary}
Let $(\fg,[p])$ be a finite-dimensional restricted Lie algebra over a field of
prime characteristic $p>3$, and let $\fn\unlhd\fg$ be a $p$-ideal.
Then $S(\fg)$ is a $p$-group if, and only if,  both $S(\fg/\fn)$ and $S(\fg)$ are
$p$-groups.
\end{Corollary}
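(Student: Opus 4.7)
The plan is to reduce the corollary to a standard fact in Lie theory by invoking Theorem \ref{maintheoremsolvable}. I read the right-hand condition as requiring $S(\fn)$ (rather than a repeated $S(\fg)$) to be a $p$-group, which is the natural statement in this context and the one compatible with Lemma \ref{lemma1} and Lemma \ref{lemma2}.

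First I would apply Theorem \ref{maintheoremsolvable} three times. The main theorem asserts, under the standing hypothesis $p>3$, that a finite-dimensional restricted Lie algebra has $p$-group toral stabilizer if and only if it is solvable. Since $\fn \unlhd \fg$ is a $p$-ideal, the quotient $\fg/\fn$ inherits a canonical restricted structure, and $\fn$ is itself a restricted Lie algebra (with the inherited $[p]$-map). Each of the three conditions on $S(-)$ therefore translates into solvability of the corresponding algebra.

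The conclusion then follows from the classical fact that $\fg$ is solvable if and only if both $\fn$ and $\fg/\fn$ are solvable. The forward direction is immediate because subalgebras and homomorphic images of solvable Lie algebras are solvable. For the converse one observes that if $(\fg/\fn)^{(m)}=0$ then $\fg^{(m)} \subseteq \fn$, and if also $\fn^{(n)}=0$ then $\fg^{(m+n)} \subseteq \fn^{(n)} = 0$. Combining these two paragraphs yields the corollary.

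There is no real obstacle here; the deep content is already absorbed into Theorem \ref{maintheoremsolvable}, and the corollary is essentially a bookkeeping consequence of that theorem together with the closure of the class of solvable Lie algebras under extensions.
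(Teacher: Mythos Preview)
Your reading of the typo is correct, and your proof is valid: translating all three toral-stabilizer conditions into solvability via Theorem~\ref{maintheoremsolvable} and then invoking the closure of solvable Lie algebras under extensions does the job cleanly.

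The paper's one-line proof is slightly different in that it also cites Lemma~\ref{lemma1}(4). The point of that citation is the ``if'' direction: the block upper-triangular embedding
\[
S(\fg,\ft)\hookrightarrow
\begin{pmatrix}
S(\fn,\ft\cap\fn)&\Lie_p(\ft',\ft\cap\fn)\\
0&S(\fg/\fn,\ft')
\end{pmatrix}
\]
shows directly, by elementary group theory, that $S(\fg)$ is a $p$-group once both diagonal blocks are (the off-diagonal block is an $\FF_p$-vector space). This avoids one of your three invocations of Theorem~\ref{maintheoremsolvable} and the appeal to extensions of solvable algebras. For the ``only if'' direction the paper still needs Theorem~\ref{maintheoremsolvable}, exactly as you do. Your argument is thus more uniform, while the paper's use of Lemma~\ref{lemma1}(4) makes one direction independent of the deep classification-based theorem; both routes are short and correct.
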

\begin{proof}
This is a direct consequence of Lemma \ref{lemma1}(4) and Theorem \ref{maintheoremsolvable}.
\end{proof}

\begin{Corollary}\label{affine solvable}
Suppose that $p>3$.
Let $(\fg,[p])$ be an $l$-dimensional restricted Lie algebra such that
$\cX_\ft(k)\cong\A^{l-\rk(\fg)}$. Then $\fg$ is solvable.
\end{Corollary}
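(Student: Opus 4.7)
The plan is to apply Theorem \ref{maintheoremsolvable}: since $p > 3$, showing that $\fg$ is solvable is equivalent to showing that the toral stabilizer $S(\fg) \cong S(\fg,\ft)$ is a $p$-group. The whole argument therefore amounts to controlling the finite subgroup $S(\fg,\ft) \subseteq \Aut_p(\ft)$ under the hypothesis $\cX_\ft(k) \cong \A^{l-\rk(\fg)}$.

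The first observation I would make is that the action of $S(\fg,\ft)$ on $\cX_\ft(k)$, inherited from the $\Aut_p(\ft)$-action $h.\varphi = \varphi \circ h^{-1}$ on $\cT_\fg(k)$, is \emph{free}. Indeed, each $\varphi \in \cX_\ft(k)$ is by definition a split injective $p$-homomorphism $\ft \hookrightarrow \fg$, so the equation $h.\varphi = \varphi$ forces $h^{-1} = \id_\ft$ on all of $\ft$. Thus we obtain a finite group $S(\fg,\ft)$ acting freely and algebraically on the affine space $\A^{l-\rk(\fg)}$.

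The second step is to deduce from this freeness that $S(\fg,\ft)$ is a $p$-group. Here I would invoke the classical fixed-point theorem: for any prime $q \neq p$, a finite $q$-group acting algebraically on $\A^n_{\bar{k}}$ must have a fixed point. Consequently, if $S(\fg,\ft)$ contained an element of prime order $q \neq p$, the cyclic group it generates would have a fixed point in $\cX_\ft(k)$, contradicting freeness. Therefore $S(\fg,\ft)$ is a $p$-group, and Theorem \ref{maintheoremsolvable} then delivers the solvability of $\fg$.

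The only non-trivial external input is the fixed-point statement above, which I expect to be the main obstacle. I plan to justify it via Smith theory in \'etale cohomology: for a $q$-group $Q$ with $q \neq p$ acting on a reasonable $\bar{k}$-scheme $X$, the $\FF_q$-\'etale cohomology of the fixed locus $X^Q$ is controlled by that of $X$, so when $X = \A^n$ (which is $\FF_q$-acyclic) one forces $X^Q$ to be $\FF_q$-acyclic and in particular non-empty. An equivalent, more elementary route uses the compactly supported $\ell$-adic Euler characteristic: a free action of a finite $p'$-group $S'$ on $\A^n$ would yield the multiplicativity relation $1 = \chi_c(\A^n) = |S'| \cdot \chi_c(\A^n/S')$, valid for tame \'etale Galois covers, forcing $|S'| = 1$.
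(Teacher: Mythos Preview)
Your proposal is correct and follows the paper's strategy exactly: show that $S(\fg,\ft)$ is a $p$-group and then apply Theorem~\ref{maintheoremsolvable}. The only difference is that the paper obtains the $p$-group conclusion by a direct citation of \cite[Theorem~6.7]{Fa04}, whereas you supply an independent argument for that step (freeness of the $S(\fg,\ft)$-action on $\cX_\ft(k)\cong\A^n$ together with a fixed-point/Euler-characteristic obstruction to free $p'$-group actions on affine space).
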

\begin{proof}
According to \cite[Theorem 6.7]{Fa04}, the toral stabilizer is a $p$-group.
Now our assertion follows from Theorem \ref{maintheoremsolvable}.
\end{proof}

\begin{Remarks}
(1). It should be interesting to find a connection between the
representation theory of solvable restricted Lie algebra and the toral stabilizer,
see \cite{FSW} for the maximal $0$-PIM property.

(2). Suppose that $p>3$.
Let $(\fg,[p])$ be an $l$-dimensional restricted Lie algebra with $\mu(\fg)=1$,
$\ft\subseteq\fg$ is a one-dimensional torus.
Hence Theorem \ref{maintheoremsolvable} and Corollary \ref{affine solvable}
imply that the following statements are equivalent:
\begin{enumerate}
\item[(a)] $\fg$ is solvable.
\item[(b)] $S(\fg,\ft)=\{1\}$.
\item[(b)] $\cX_\ft(k)\cong\A^{l-\rk(\fg)}$.
\end{enumerate}

(3). In the proof of Theorem \ref{maintheoremsolvable},
we can also use Theorem \ref{S(g)Zassenhaus} to deal with the case $\fL\cong W(1;\underline{2})$.
\end{Remarks}

\begin{Question}
Suppose that $p>3$.
Determine the solvable restricted Lie algebra $(\fg,[p])$ with trivial
toral stabilizer, i.e., $S(\fg)=\{1\}$.
\end{Question}

\section{Sylow subalgebra}
Suppose that $p>3$.
Let $(\fg,[p])$ be a restricted Lie algebra with toral stabilizer $S(\fg)$.
We call a $p$-subalgebra $\fc$ of $\fg$ a Sylow subalgebra if
$S(\fc)$ is a Sylow $p$-subgroup of $S(\fg)$.
If $\fg$ contains such a Sylow subalgebra $\fc$,
then $\fc$ should be solvable (Theorem \ref{maintheoremsolvable}).

Let we consider the $n$-th Jacobson-Witt algebra $W(n;\underline{1})$.
In this section, we will always assume that $\fg:=W(n;\underline{1})$.

Let $B_n$ be the truncated polynomial ring in $n$ variables:
$$B_n=k[X_1,\cdots,X_n]/(X_1^p,\cdots,X_n^p)\,\,(n\geq 1).$$
By definition, $\fg:=\Der(B_n)$ is the algebra of derivation
of $B_n$. Let $G:=\Aut_p(\fg)$ be the automorphism of $\fg$.
According to \cite[Theorem 2]{W}, $G$ is a connected group and
any automorphism of $\fg$ can be induced from an automorphism of $B_n$.
There results an isomorphism $G\cong\Aut(B_n)$.
Let
$$x_i:= X_i+(X_1^p,\cdots,X_n^p)\in B_n.$$
Denote by $D_{i}=\p/\p x_i\in\fg$ the partial derivative with respect to the
variable $x_i$. Then $\{D_1,\cdots,D_n\}$ is a basis of the
$B_n$-module $\fg$, so that $\dim_{k}\fg=np^n$.
Let $\ft_0:=\langle(1+x_1)D_1,\dots,(1+x_n)D_n\rangle$ be the standard
generic torus of $\fg$ (see \cite[Page 1337]{BFS}).
According to \cite[Theorem 1(i), Lemma 2]{P92},
the Weyl group $W(\fg,\ft_0):=\Nor_G(\ft_0)/\Cent_G(\ft_0)$
relative to $\ft_0$ is isomorphic to the general linear group $\GL_n(\FF_p)$ and $\Cent_G(\ft_0)$ is trivial.
Furthermore, there is an isomorphism $W(\fg,\ft_0)\cong S(\fg)$ (\cite[Proposition 3.3]{BFS}).
Let $\fc$ be a maximal solvable subalgebra which contains $\ft_0$.
Thanks to the result of \cite[Section 4]{Shu}, $\fc$ is unique up to conjugation by $G$.
Recall that $\fg$ has a natural $\mathbb{Z}$-grading structure (see \cite[\S 4.2]{SF}),
$$\fg=\bigoplus\limits_{i=-1}^{n(p-1)-1}\fg_i.$$
We take the standard maximal solvable subalgebra, still denote by $\fc$ (see \cite[Section 3]{Shu}).
Recall that
\begin{equation}\label{standard borel}
\fc=\fg_{-1}+\fb+\sum\limits_{i=1}^{n}\sum\limits_{\alpha(i)}x_1^{\alpha_1}\cdots x_i^{\alpha_i}D_i
\end{equation}
where $\fb$ is the standard Borel subalgebra of $\fg_0\cong\mathfrak{gl}_n$ and
$\alpha(i)=(\alpha_1,\dots,\alpha_i)$ with $|\alpha(i)|>1$ (see \cite[Section 3.1]{Shu} for details).
We denote by $\U\subseteq\GL_n(\FF_p)$ be the lower-triangular unipotent subgroup of $\GL_n(\FF_p)$.
It is well-known (and can be easily verified) that $\U$ is a Sylow $p$-subgroup of $\GL_n(\FF_p)$.

\begin{Lemma}\label{automorphismexist}
Keep the notations as above.
Let $g\in\U$.
Then there exists an element $\tilde{g}\in G$ such that $\fc$ is
$\tilde{g}$-stable and the restriction $\tilde{g}|_{\ft_0}=g$.
\end{Lemma}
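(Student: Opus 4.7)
My plan is to realize each elementary generator of $\U$ by an explicit algebra automorphism of $B_n$, and then to assemble a general $g \in \U$ by composition. Recall that $\U$ is generated by the transvections $u_{ji}(c) := I + cE_{ji}$ for $1 \leq i < j \leq n$ and $c \in \FF_p$. Since $\Cent_G(\ft_0)$ is trivial (as quoted from \cite[Lemma 2]{P92}), the restriction map $\Nor_G(\ft_0) \to \GL_n(\FF_p) \cong S(\fg)$ is injective, so the collection of $g \in \GL_n(\FF_p)$ admitting a lift inside $\Nor_G(\ft_0) \cap \Nor_G(\fc)$ is a subgroup of $\GL_n(\FF_p)$. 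Hence it suffices to produce such a lift for each $u_{ji}(c)$.

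For fixed $i < j$ and $c \in \FF_p$, I would consider the algebra automorphism $\Phi$ of $B_n$ defined on generators by
$$\Phi(x_j) := (1+x_j)(1+x_i)^{-c} - 1, \qquad \Phi(x_l) := x_l \quad (l \neq j),$$
which makes sense because $(1+x_i)^p = 1$; its inverse is obtained by replacing $-c$ by $c$. Setting $\tilde{g} := \tilde{\Phi}$, the formula $\tilde{g}(D)(f) = \Phi(D(\Phi^{-1}(f)))$ yields after a direct calculation
$$\tilde{g}(D_i) = D_i + c(1+x_j)(1+x_i)^{-1} D_j, \quad \tilde{g}(D_j) = (1+x_i)^c D_j, \quad \tilde{g}(D_l) = D_l \ \ (l \neq i, j).$$
Multiplying each of these by $\Phi(1+x_k)$ and simplifying gives $\tilde{g}(t_i) = t_i + c t_j$, $\tilde{g}(t_j) = t_j$, and $\tilde{g}(t_l) = t_l$ for $l \neq i$, whence $\tilde{g}|_{\ft_0}$ is precisely $u_{ji}(c)$.

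It then remains to verify that $\tilde{g}(\fc) \subseteq \fc$. Via the decomposition (\ref{standard borel}) I would apply $\tilde{g}$ to each class of generator: the $D_l$ spanning $\fg_{-1}$, the basis elements $x_l D_k$ (with $l \leq k$) of the lower-triangular Borel $\fb \subset \fg_0$, and the monomials $x_1^{\alpha_1} \cdots x_k^{\alpha_k} D_k$ with $|\alpha| > 1$. The key observation is that $\Phi$ alters only $x_j$ and replaces it by a polynomial in $x_i, x_j$ with $i < j$; hence any polynomial in $x_1, \ldots, x_k$ (with $j \leq k$) remains one after applying $\Phi$, and the coefficients of $D_m$ appearing in $\tilde{g}(D_i), \tilde{g}(D_j)$ belong to $k[x_i, x_j] \subseteq k[x_1, \ldots, x_m]$ for $m \in \{i, j\}$. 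Combining these facts yields $\tilde{g}(\fc) \subseteq \fc$, and applying the same argument to $\Phi^{-1}$ (i.e.\ with $-c$ in place of $c$) delivers the reverse inclusion.

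The main obstacle will be the bookkeeping in this last step, particularly controlling the new \emph{degree-one} contributions produced when $\tilde{g}$ is expanded out. One must check that every linear term $x_l D_m$ appearing after expansion still satisfies $l \leq m$ so that it remains in $\fb$ rather than leaving it, and that each higher-degree term $x_1^{\beta_1}\cdots x_m^{\beta_m} D_m$ produced uses only the permitted variables $x_1, \ldots, x_m$. Once this case analysis is carried out the lemma follows.
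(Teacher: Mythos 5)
Your proposal is correct and follows essentially the same route as the paper: the paper also lifts $g$ to the automorphism of $B_n$ determined multiplicatively on the units $1+x_l$ (namely $\Phi(1+x_i)=\prod_{j\le i}(1+x_j)^{a_{ji}}$, using $(1+x_j)^p=1$), computes the restriction to $\ft_0$ via Demushkin's formula, and checks $\fc$-stability from the shape of (\ref{standard borel}), i.e.\ from $\fc=\bigoplus_k k[x_1,\dots,x_k]D_k$. The only difference is that you first reduce to elementary transvections and compose, whereas the paper writes the lift for a general $g\in\U$ in one formula; this is a presentational variation, not a different argument.
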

\begin{proof}
We write $g=(a_{ij})$.
It follows that $a_{ii}=1,a_{ij}\in\FF_p$ and $a_{ij}=0$ for $i>j$.
Note that the set $\{(1+x_i);~1\leq i\leq n\}$
generate the algebra $B_n$.
We construct an automorphism $\Phi\in\Aut(B_n)$ as follows:
$$\Phi(1+x_i):=\prod\limits_{j=1}^{i}(1+x_j)^{a_{ji}};~~1\leq i\leq n.$$
Let $\tilde{g}\in G$ be the corresponding automorphism.
Then it is easy to check that $\tilde{g}|_{\ft_0}=g$ (Use a formula of Demushkin, see \cite[Page 234]{D1}).
The another assertion follows from the structural observation of
(\ref{standard borel}) and a basic computation.
\end{proof}

\begin{Remark}
In the Lemma \ref{automorphismexist},
the element $\tilde{g}$ is unique (see \cite[Lemma 2]{P92}).
\end{Remark}

\begin{Theorem}\label{sylowsubalgebraofwn}
The standard maximal solvable subalgebra $\fc$ is a Sylow subalgebra of $\fg$.
\end{Theorem}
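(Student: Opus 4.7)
The strategy is to match upper and lower bounds on the $p$-group $S(\fc)$ against the Sylow order of $\GL_n(\FF_p) \cong S(\fg)$.

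\emph{Upper bound.} Since $\ft_0 \subseteq \fc \subseteq \fg$ and $\mu(\fg) = n$, the torus $\ft_0$ is of maximal dimension in $\fc$, so $\ft_0 \in \Tor(\fc)$ and $\mu(\fc) = n$. By Lemma \ref{lemma1}(3), $S(\fc) = S(\fc,\ft_0)$ embeds as a subgroup of $S(\fg,\ft_0) = S(\fg) \cong \GL_n(\FF_p)$. Since $\fc$ is solvable by construction, Theorem \ref{rolftheorem} forces $S(\fc)$ to be a $p$-subgroup of $\GL_n(\FF_p)$; in particular $|S(\fc)| \leq |\U| = p^{n(n-1)/2}$.

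\emph{Lower bound.} I would exhibit an injective group homomorphism $\U \hookrightarrow S(\fc,\ft_0)$ sending $g$ to $g$. For $g \in \U$, Lemma \ref{automorphismexist} supplies $\tilde{g} \in G$ with $\tilde{g}(\fc) = \fc$ and $\tilde{g}|_{\ft_0} = g$. Writing $h := \tilde{g}|_\fc \in \Aut_p(\fc)$, the identity $h \circ \iota = \iota \circ g$, where $\iota \colon \ft_0 \hookrightarrow \fc$ is the canonical inclusion, rewrites as $h.\iota = g^{-1}.\iota$ in $\cT_\fc(k)$. Thus $g \in S(\fc,\ft_0)$ provided $h$ stabilises the irreducible component $\cX_{\ft_0}^\fc(k) \ni \iota$; for this it suffices that $h \in \Aut_p(\fc)^\circ$, since the identity component of the algebraic group $\Aut_p(\fc)$ acts morphically on $\cT_\fc$ and hence preserves each irreducible component setwise.

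\emph{Identity-component via a one-parameter family.} To place $h$ in $\Aut_p(\fc)^\circ$, I would deform the construction of Lemma \ref{automorphismexist}. Writing $g = (a_{ij}) \in \U$ with $a_{ii} = 1$, set, for $s \in \mathbb{G}_a(k) = k$,
$$\Phi_s(1+x_i) := (1+x_i) \prod_{j < i} (1+x_j)^{s\, a_{ji}},$$
using the binomial expansion $(1+x_j)^c = \sum_{\ell=0}^{p-1}\binom{c}{\ell}x_j^\ell$ for $c \in k$. Then $\Phi_0 = \id$, $\Phi_1$ is the automorphism of Lemma \ref{automorphismexist}, and $s \mapsto \Phi_s$ is a morphism $\mathbb{G}_a \to \Aut(B_n) \cong G$. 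A degree-by-degree filtration check, using the explicit shape of the standard Borel in (\ref{standard borel}), shows that each $\Phi_s$ lies in $\Nor_G(\fc)$; restricting therefore defines a morphism $\mathbb{G}_a \to \Aut_p(\fc)$ whose image is a connected subgroup containing $\id$ (at $s = 0$) and $h$ (at $s = 1$). Hence $h \in \Aut_p(\fc)^\circ$.

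\emph{Conclusion and main obstacle.} Assembling the pieces, $g \mapsto g$ is an injective homomorphism $\U \hookrightarrow S(\fc,\ft_0)$; together with the upper bound this gives $S(\fc) = \U$, a Sylow $p$-subgroup of $S(\fg) \cong \GL_n(\FF_p)$, and so $\fc$ is a Sylow subalgebra. The principal technical obstacle is the verification that every member of the family $\Phi_s$ (and not only $\Phi_1$) normalises $\fc$: this is where the precise description of the standard Borel in (\ref{standard borel}) and the triangular structure of the coefficients $a_{ji}$ must be combined to propagate the filtration-preserving property from $s = 1$ to all $s \in k$.
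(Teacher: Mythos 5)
Your proposal is correct and follows essentially the same route as the paper: the upper bound comes from solvability of $\fc$ (so $S(\fc)$ is a $p$-group) together with the embedding $S(\fc)\subseteq S(\fg)\cong\GL_n(\FF_p)$ from Lemma \ref{lemma1}(3), and the lower bound $\U\subseteq S(\fc,\ft_0)$ comes from Lemma \ref{automorphismexist}. Your one-parameter family $\Phi_s$ is an explicit verification that $\tilde{g}|_{\fc}$ lies in $\Aut_p(\fc)^\circ$ and hence stabilizes the component $\cX_{\ft_0}(k)$ of $\cT_\fc(k)$ --- precisely the connectedness input that the paper obtains by citing \cite[Lemma 1.2]{BFS} --- so the two arguments coincide in substance.
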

\begin{proof}
Since $S(\fg)\cong\GL_n(\FF_p)$ (Lemma \ref{S(g)cartan type}), it suffices to prove that $S(\fc)\cong\U$.
Thanks to Lemma \ref{automorphismexist} and \cite[Lemma 1.2]{BFS}, the group $U$ is a subgroup of $S(\fc)$.
In view of Theorem \ref{maintheoremsolvable}, the group $S(\fc)$ is a $p$-group of $\GL_n(\FF_p)$.
Note that $U$ is a Sylow $p$-subgroup of $\GL_n(\FF_p)$.
Thus, they must be equal.
\end{proof}

\begin{Remark}
Suppose that $p>3$.
We consider the special linear Lie algebra $\fg:=\mathfrak{sl}(n)$.
The symmetric group $S_n$ is the Weyl group of $\fg$.
Thanks to \cite[Theorem 4.7]{Fa04}, there is an isomorphism $S(\fg)\cong S_n$. Suppose that $p>n$.
The trivial group is the only Sylow subgroup of $S(\fg)$.
Hence, the Sylow subalgebra always exists (e.g. Let $\fc$ be any Borel subalgebra of $\fg$).
Suppose that $p\leq n$. If there exists a Sylow subalgebra $\fc$ (assume that $\fc$ is maximal),
then maximal solvable subalgebra can not be the Borel subalgebra.
\end{Remark}

Let $\fg$ be a restricted Lie algebra of Cartan type $S(n;\underline{1})^{(1)}$
or $H(2n;\underline{1})^{(2)}$ (see \cite[(4.3)(4.4)]{SF}).
According to \cite[Lemma 4.1 and 4.3]{BFS}, there result embeddings
$$W(\mu(\fg);\underline{1})\hookrightarrow \fg.$$
This yields:
\begin{Proposition}\label{sylowsubalgebrasofSH}
There exists a Sylow subalgebra of $\fg$ when $\fg$ is type $S$ or $H$
\end{Proposition}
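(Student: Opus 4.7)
The plan is to transport the Sylow subalgebra constructed in Theorem \ref{sylowsubalgebraofwn} for the Jacobson--Witt algebra through the embedding $W(\mu(\fg);\underline{1})\hookrightarrow\fg$ provided by \cite[Lemma 4.1 and 4.3]{BFS}. Write $m:=\mu(\fg)$ and let $\fh:=W(m;\underline{1})\subseteq\fg$. First I would verify that $\mu(\fh)=m=\mu(\fg)$, which is immediate from Demushkin's theorem (used already in Section 2 via \cite[Theorem 1]{D1}), so a torus $\ft_0\in\Tor(\fh)$ is automatically in $\Tor(\fg)$.

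Second, I would let $\fc\subseteq\fh$ be the standard maximal solvable subalgebra of $\fh$ containing $\ft_0$, as in (\ref{standard borel}). By Theorem \ref{sylowsubalgebraofwn}, $S(\fc)$ is a Sylow $p$-subgroup of $S(\fh)$; equivalently, under the natural embedding from Lemma \ref{lemma1}(3) the group $S(\fc)$ is identified with the lower-triangular unipotent subgroup $\U\subseteq\GL_m(\FF_p)\cong S(\fh)$.

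Third, since $\mu(\fh)=\mu(\fg)$, Lemma \ref{lemma1}(3) furnishes an injection $S(\fh)\hookrightarrow S(\fg)$. Composing with the embedding $S(\fc)\hookrightarrow S(\fh)$, we obtain an injective homomorphism $S(\fc)\hookrightarrow S(\fg)$ with image contained in $\U$ viewed inside the ambient group. By Lemma \ref{S(g)cartan type} we have $S(\fg)\cong\GL_m(\FF_p)$, of the same order as $S(\fh)$, so the embedding $S(\fh)\hookrightarrow S(\fg)$ is in fact an isomorphism and transports Sylow $p$-subgroups to Sylow $p$-subgroups. Thus the image of $S(\fc)$ is a Sylow $p$-subgroup of $S(\fg)$, proving that $\fc$ is a Sylow subalgebra of $\fg$.

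The only point requiring attention is the equality $\mu(\fh)=\mu(\fg)$, which is what allows the application of Lemma \ref{lemma1}(3); once this is in place, everything reduces to Theorem \ref{sylowsubalgebraofwn} and Lemma \ref{S(g)cartan type} by a clean order comparison, and no further combinatorial work inside the Cartan type $S$ or $H$ algebras is needed.
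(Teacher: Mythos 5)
Your proof is correct and follows essentially the same route as the paper: transport the Sylow subalgebra $\fc\subseteq W(\mu(\fg);\underline{1})$ of Theorem \ref{sylowsubalgebraofwn} through the embedding $W(\mu(\fg);\underline{1})\hookrightarrow\fg$, using $\mu(W(\mu(\fg);\underline{1}))=\mu(\fg)$ together with Lemma \ref{lemma1}(3) and the isomorphism $S(\fg)\cong\GL_{\mu(\fg)}(\FF_p)$ from Lemma \ref{S(g)cartan type} to compare orders. The paper states this more tersely, but the underlying argument is the same.
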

\begin{proof}
According to Lemma \ref{S(g)cartan type}, there is an isomorphism
$$S(\fg)\cong\GL_{\mu(\fg)}(\FF_p).$$
Now,
we consider the above embedding map
$$W(\mu(\fg);\underline{1})\hookrightarrow \fg.$$
Hence the image of Sylow subalgebra of $W(\mu(\fg);\underline{1})$ is
a Sylow subalgebra of $\fg$ (Theorem \ref{sylowsubalgebraofwn} implies that such Sylow subalgebra exists).
\end{proof}

To sum up this section, we propose to consider the following question:
\begin{Question}
Suppose that $p>3$.
Whether every restricted Lie algebra has a Sylow subalgebra?
\end{Question}

\section{Subalgebras in the exceptional Lie algebras}\label{section subalgebra}
In this section,
we will explain how to determine the subalgebra structure of the
simple classical Lie algebras of exceptional type by using the toral stabilizer.

Let $G$ be a connected simple algebraic group.
Fix a Borel subgroup $B$ containing some maximal torus $T$
and let $U$ be the unipotent radical of $B$.
Let $$\cX(T):=\Hom(T,\GG_m)$$ be the character group and let
$\Phi\subseteq\cX(T)$ be the root system associated to $G$ with respect to $T$.
Let $\Lambda(\Phi)$ be the integral weight lattices of $\Phi$.
It is clear that $\cX(T)\subseteq\Lambda(\Phi)$.
The quotient $\Lambda(\Phi)/\cX(T)$ is called the fundamental group of $G$ and is denoted by $\pi_1(G)$.
We say that $G$ is simply connected if its fundamental group is trivial.

If $\beta=\sum_im_i\alpha_i$ is the highest root written as a linear combination of simple
roots then $p$ is bad for $\Phi$ if $p=m_i$ for some $i$.
Similarly we may write the dual
of this root as a linear combination of dual simple roots
$\beta^{\vee}=\sum_im_i'\alpha_i^{\vee}$ and $p$ is torsion
for $\Phi$ if $p=m_i'$ for some $i$.
A prime is good (respectively non-torsion) if it is not bad (respectively torsion).

\begin{Definition}
Let $G$ be a connected simple algebraic group.
We say that $p$ is a torsion prime of $G$,
when it is a torsion prime of $\Phi$ or when $p$ divides $\pi_1(G)$.
\end{Definition}
\begin{Remark}
This definition is in fact \cite[Lemma 2.5]{Ste}.
For the original definition of torsion primes of $G$, see \cite[Definition 2.1]{Ste}.
\end{Remark}
\begin{Lemma}\cite[Theorem 3.14]{Ste}.\label{Steinberglemma}
Let $\fg:=\Lie(G)$ be the Lie algebra of a connected simple algebraic $k$-group and $p=\mathrm{char}(k)$.
Then the following statements are equivalent:
\begin{enumerate}
\item[(a)] $p$ is not a torsion prime for $G$.
\item[(b)] The centralizer $C_G(H)$ is connected for every commutative set $H\subseteq\fg$ of semisimple elements.
\end{enumerate}
\end{Lemma}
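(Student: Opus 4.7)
My plan follows the classical Steinberg-type strategy, proving the two implications separately and reducing to facts about centralizers of subtori.

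For the direction (a) $\Rightarrow$ (b), the plan is to reduce to centralizers of subtori of $G$. Given a commutative set $H \subseteq \fg$ of semisimple elements, the $k$-linear span $V := kH$ is itself a toral (abelian, $[p]$-semisimple) subalgebra of $\fg$, so under the non-torsion assumption $V$ embeds into $\Lie(T)$ for some maximal torus $T \subseteq G$. Let $T'$ be the smallest subtorus of $T$ whose Lie algebra contains $V$. The key step is the identification $C_G(H) = C_G(V) = C_G(T')$, which holds precisely because the differentials of the roots of $G$ separate the characters of subtori of $T$ under the non-torsion hypothesis; this is where (a) gets used. The conclusion then follows since $C_G(T')$ is a Levi subgroup of a parabolic of $G$, hence connected by standard reductive group theory.

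For the converse (b) $\Rightarrow$ (a), I would argue by contrapositive and construct explicit disconnected centralizers when $p$ is torsion. If $p$ divides $\pi_1(G)$, I would work with the simply connected cover $G_{\mathrm{sc}} \to G$ and use an order-$p$ element of its (now non-trivially intersecting) kernel to produce a semisimple element of $\fg$ whose centralizer has a non-trivial component group. If instead $p$ divides some coefficient $m_i'$ in the highest-coroot expansion, I would construct a semisimple element $x \in \Lie(T)$ corresponding to $\frac{1}{p}$ of an appropriate fundamental coweight (via differentiation of one-parameter subgroups), and show that the associated subsystem subgroup contributes extra components to $C_G(x)$.

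The main obstacle is the direction (b) $\Rightarrow$ (a): producing explicit semisimple elements with disconnected centralizer demands a detailed root-datum analysis and a case-by-case inspection tied to the exact characterization of torsion primes, and this is what absorbs the bulk of the work in Steinberg's original argument. By contrast, (a) $\Rightarrow$ (b) is relatively soft once one can replace $H$ by a subtorus, which is precisely where the non-torsion hypothesis enters.
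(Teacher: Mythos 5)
The paper does not prove this lemma at all: it is quoted directly from Steinberg (\cite[Theorem 3.14]{Ste}), so there is no in-house argument to compare your sketch against; I can only measure it against the known proof. Your outline of (b) $\Rightarrow$ (a) is a fair description of the standard strategy. The problem lies in (a) $\Rightarrow$ (b).

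The key identity you rely on, $C_G(H)=C_G(T')$ with $T'$ the smallest subtorus of $T$ whose Lie algebra contains $V=kH$, is false -- already for a single semisimple element and already when $p$ is not a torsion prime. Take $G=\SL_3$ with $\Char(k)=3$ (type $A_2$, simply connected, so $3$ is not a torsion prime) and $H=\{x\}$ with $x=\mathrm{diag}(1,1,1)\in\Lie(T)$; note $x^{[3]}=x$, so $x$ is semisimple. Then $x$ is central in $\fg$ and $C_G(x)=G$, whereas the smallest subtorus $T'\subseteq T$ with $x\in\Lie(T')$ is the one-dimensional torus $\{\mathrm{diag}(t,t,t^{-2})\}$, whose centralizer is a proper Levi subgroup isomorphic to $\GL_2$. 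The ``separation of characters by differentials of roots'' you invoke fails here because $d:\cX(T)\rightarrow\Lie(T)^{*}$ kills $p\cX(T)$: every $d\alpha$ vanishes on $x$, while no nontrivial subtorus of $T$ is killed by all roots. The same phenomenon shows that in general $C_G(H)^{\circ}$ is only a pseudo-Levi (a Borel--de Siebenthal type subgroup), not the centralizer of a subtorus, so the soft reduction to ``centralizers of subtori are connected'' cannot carry the argument. The actual proof computes, for $x\in\Lie(T)$, that $C_G(x)$ is generated by $T$, the root subgroups $U_{\alpha}$ with $d\alpha(x)=0$, and representatives of the stabilizer $W_x$ of $x$ in the Weyl group; connectedness is then equivalent to $W_x$ being generated by the reflections $s_{\alpha}$ with $d\alpha(x)=0$, and it is exactly in this reflection-subgroup analysis (and in its propagation to commuting sets by induction) that the torsion-prime hypothesis is used. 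A further circularity in your sketch: the assertion that a commutative set of semisimple elements of $\fg$ lies in $\Lie(T)$ for a single maximal torus is, for sets with more than one element, itself part of what must be proved -- it requires knowing that $C_G(x)$ is connected and reductive with Lie algebra $C_{\fg}(x)$ before the next element can be placed -- so it cannot be taken as a starting point. A corrected reduction does exist (replace $T'$ by the connected intersection of the kernels of the roots vanishing on $V$ and prove both the saturation of the resulting subsystem and the reflection-generation of $W_x$), but establishing those two facts is precisely the content of Steinberg's argument, not a soft preliminary.
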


We denote by $W(G,T)=\Nor_G(T)/\Cent_G(T)$ the Weyl group of $G$
relative to $T$. We write $W=W(G,T)$ for short.

\begin{Lemma}\cite[Theorem 4.7]{Fa04}.\label{rolflemma}
Let $\fg:=\Lie(G)$ be the Lie algebra of a smooth, connected, algebraic group,
$\ft:=\Lie(T)$ a torus of dimension $\mu(\fg)$.
Then the following statements hold:
\begin{enumerate}
\item[(1)] $S(\fg,\ft)\cong\Nor_G(\ft)/\Cent_G(\ft)$.
\item[(2)] If $p\geq 5$, then $S(\fg,\ft)\cong W(G,T)$.
\end{enumerate}
\end{Lemma}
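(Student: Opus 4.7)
For part (1), the natural homomorphism $\Psi:\Nor_G(\ft)\to\Aut_p(\ft)$, $g\mapsto\Ad(g)|_\ft$, has kernel exactly $\Cent_G(\ft)$, and the plan is to identify the resulting injection $\bar{\Psi}:\Nor_G(\ft)/\Cent_G(\ft)\hookrightarrow\Aut_p(\ft)$ with $S(\fg,\ft)$. First I would check that $\bar{\Psi}$ lands in $S(\fg,\ft)$: for $g\in\Nor_G(\ft)$ and $h:=\Ad(g)|_\ft$, a direct computation gives $h.\iota=\iota\circ h^{-1}=\Ad(g^{-1})\circ\iota=g^{-1}.\iota$, where $\iota:\ft\hookrightarrow\fg$ denotes the canonical embedding. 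Since $G$ is connected and acts algebraically on $\cT_\fg$, the orbit $G.\iota$ is irreducible and entirely contained in the distinguished component $\cX_{\ft}(k)$; consequently $h$ stabilizes $\cX_{\ft}(k)$.

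For the reverse inclusion I would run a dimension count. Smoothness of $G$ together with $\Stab_G(\iota)=\Cent_G(\ft)$ give $\dim G.\iota=\dim G-\dim\Cent_G(\ft)=\dim\fg-\rk(\fg)$, which matches $\dim\cX_{\ft}(k)=\dim\Tor(\fg)$ recorded in Section \ref{Section 1}. Thus $G.\iota$ is open and dense in the irreducible variety $\cX_{\ft}(k)$. Given any $h\in S(\fg,\ft)$, the set $G.(h.\iota)=h.(G.\iota)$ (using that the two actions commute) is again open and dense, so it meets $G.\iota$; unwinding an equation $g.\iota=h.(g'.\iota)$ produces an element of $\Nor_G(\ft)$ whose restriction to $\ft$ equals $h$. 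The technical heart of this step is the identity $\dim\Cent_G(\ft)=\rk(\fg)$, which rests on smoothness of $\Cent_G(\ft)$ together with $\Lie(\Cent_G(\ft))=\Cent_\fg(\ft)$; this is the main obstacle in part (1) and is where the smoothness hypothesis on $G$ is decisive.

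For part (2), I would deduce $\Nor_G(\ft)/\Cent_G(\ft)\cong\Nor_G(T)/\Cent_G(T)$ from part (1). The inclusions $\Nor_G(T)\subseteq\Nor_G(\ft)$ and $\Cent_G(T)\subseteq\Cent_G(\ft)$ are automatic from $\Lie(gTg^{-1})=\Ad(g)(\ft)$. Conversely, if $g\in\Nor_G(\ft)$, then $gTg^{-1}$ is a connected subtorus of $G$ sharing its Lie algebra with $T$, and since smooth connected subgroups of a smooth algebraic group are pinned down by their Lie algebras, $gTg^{-1}=T$. Any $g\in\Cent_G(\ft)$ therefore normalizes $T$ and induces a Weyl group element with trivial differential on $\ft$; the key point is that for $p\geq 5$ the reflection representation $W(G,T)\hookrightarrow\GL(\ft)$ remains faithful, forcing this element to be trivial. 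Establishing this faithfulness is the main obstacle in part (2), and is precisely what can fail in the small bad characteristics that the hypothesis $p\geq 5$ is designed to rule out.
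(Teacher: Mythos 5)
The paper does not prove this lemma --- it is quoted verbatim from \cite[Theorem 4.7]{Fa04} --- so I measure your sketch against the argument it would have to supply. Part (1) is sound and follows the expected route: $G.\iota$ lies in $\cX_\ft(k)$ by connectedness, density follows from a dimension count, and any $h\in S(\fg,\ft)$ is then transported back into $\Nor_G(\ft)$. One remark: you do not actually need smoothness of $\Cent_G(\ft)$, which you single out as the technical heart. The a priori inequality $\dim\Stab_G(\iota)\leq\dim_k\Lie(\Stab_G(\iota))\leq\dim_k C_\fg(\ft)=\rk(\fg)$ (the last equality because centralizers of tori of maximal dimension are Cartan subalgebras of minimal dimension) already yields $\dim G.\iota\geq\dim_k\fg-\rk(\fg)=\dim\cX_\ft(k)$, forcing density; the equality $\dim\Cent_G(\ft)=\rk(\fg)$ then comes out for free. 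What you genuinely need, and use correctly, is smoothness of $G$ itself (so that $\dim G=\dim_k\fg$) and the fact that every irreducible component of $\cT_\fg(k)$ has dimension $\dim\Tor(\fg)$.

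Part (2) contains a genuine gap. The principle you invoke --- that smooth connected subgroups of a smooth algebraic group are determined by their Lie algebras --- is false in characteristic $p$, and with it both of your intermediate claims $\Nor_G(\ft)=\Nor_G(T)$ and $\Cent_G(\ft)=\Cent_G(T)$ fail. Take $G=\GG_a\rtimes\GG_m$ with $\GG_m$ acting through the character $t\mapsto t^p$, and let $T=\GG_m$, $\ft=\Lie(T)$; here $\mu(\fg)=1=\dim_k\ft$, every conjugate $xTx^{-1}$ with $x\in\GG_a\setminus\{0\}$ is a maximal torus distinct from $T$ yet with Lie algebra exactly $\ft$, and one computes $\Nor_G(\ft)=\Cent_G(\ft)=G$ while $\Nor_G(T)=\Cent_G(T)=T$. (The two quotients still agree --- both are trivial --- but your derivation of that agreement breaks.) The correct mechanism is conjugacy of maximal tori: for $g\in\Nor_G(\ft)$, both $T$ and $gTg^{-1}$ are contained in $\Cent_G(\ft)^{\circ}$ and are maximal tori of that connected group, so some $c\in\Cent_G(\ft)^{\circ}$ satisfies $cg\in\Nor_G(T)$; hence $\Nor_G(\ft)=\Nor_G(T)\cdot\Cent_G(\ft)$ and $W(G,T)$ surjects onto $\Nor_G(\ft)/\Cent_G(\ft)$. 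Injectivity then reduces to precisely the faithfulness statement you identify: an element of $\Nor_G(T)\cap\Cent_G(\ft)$ induces $w\in W(G,T)$ acting trivially on $\ft\cong Y(T)\otimes_{\ZZ}k$ (with $Y(T)$ the cocharacter lattice), and since the kernel of $\GL(Y(T))\rightarrow\GL(Y(T)\otimes\FF_p)$ is torsion-free for $p\geq 3$ by Minkowski's lemma, $w=1$. So the faithfulness half of your argument is fine; it is the reduction of $\Nor_G(\ft)$ to $\Nor_G(T)$ that must be replaced by the conjugacy argument inside $\Cent_G(\ft)^{\circ}$.
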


In the following, we assume that $G$ is an exceptional algebraic group.
For future reference, we need the following:
\begin{table}[!htbp]
\centering
\caption{Order of Weyl groups,bad primes and torsion primes for $\Phi$.
see for example \cite[2.13]{GM}}\label{table1}
\begin{tabular}{|c|c|c|c|}
\hline
Type  & Order of $W$    & bad prime& torsion prime \\
\hline
$E_6$ & $2^73^45$       & $2,3$ & $2,3$\\
\hline
$E_7$ & $2^{10}3^457$   & $2,3$ & $2,3$\\
\hline
$E_8$ & $2^{14}3^55^27$ & $2,3,5$ & $2,3,5$\\
\hline
$F_4$ & $2^73^2$        & $2,3$ & $2,3$\\
\hline
$G_2$ & $2^23$          & $2,3$ & $2$\\
\hline
\end{tabular}
\end{table}

\begin{Theorem}\label{maintheorem2}
Let $G$ be a simple, simply connected exceptional algebraic $k$-group with Lie algebra $\fg:=\Lie(G)$,
where $p:=\mathrm{char}(k)$ is a good prime for $G$,
and let $\fh$ be a restricted Lie algebra such that $\mu(\fg)=2$.
If $S(\fh)\cong\GL_2(\FF_p)$, then $\fh$ can not be isomorphic to any restricted subalgebra of $\fg$.
\end{Theorem}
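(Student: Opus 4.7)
The approach is by contradiction: suppose there is a restricted embedding $\fh\hookrightarrow\fg$ with $\mu(\fh)=2$ and $S(\fh)\cong\GL_2(\FF_p)$. My strategy is to force $\GL_2(\FF_p)$ into the Weyl group $W:=W(G,T)$ in such a way that a unipotent element of $\GL_2(\FF_p)$ produces an element of order $p$ in $W$ whose fixed subspace in $\Lie(T)$ is impossibly large for good $p$.

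First I would pick $\ft\in\Tor(\fh)$ with $\dim\ft=2$ and embed $\ft$ into $\tilde\ft:=\Lie(T)$ for some maximal torus $T\subseteq G$. This is legitimate because, in good characteristic, Cartan subalgebras of $\fg$ are exactly the Lie algebras of maximal tori of $G$, and every torus lies in a Cartan subalgebra. The inclusion $\ft\subseteq\tilde\ft\cap\fh$, combined with the maximality of $\dim\ft$ in $\fh$, forces $\tilde\ft\cap\fh=\ft\in\Tor(\fh)$. Lemma \ref{lemma2} then supplies an injection
\[
S(\fh,\ft)\hookrightarrow S(\fg,\tilde\ft),\qquad g\mapsto g\oplus\id_{\tilde\ft'},
\]
where $\tilde\ft=\ft\oplus\tilde\ft'$. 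Since $p\geq 5$ (any good prime for an exceptional $G$), Lemma \ref{rolflemma}(2) identifies $S(\fg,\tilde\ft)$ with $W$. Thus I obtain a subgroup $\GL_2(\FF_p)\subseteq W$ that pointwise fixes $\tilde\ft'$. Choosing a unipotent $u\in\GL_2(\FF_p)$ (of order $p$, with a one-dimensional fixed space in $\ft$), its image $\tilde u\in W$ has order $p$ and fixes a subspace of $\tilde\ft$ of codimension exactly $1$.

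The heart of the argument is to rule out such a $\tilde u$ in $W$. I would view $\tilde\ft$ as the mod-$p$ reduction of the cocharacter lattice $X_*(T)=Q^\vee$ (using that $G$ is simply connected). Restrict the $W$-action to $P:=\langle\tilde u\rangle\cong\ZZ/p$ and invoke the Heller--Reiner classification of $\ZZ_p[\ZZ/p]$-lattices: the $\ZZ_p$-completion of $X_*(T)$ decomposes into $a$ copies of $\ZZ_p$, $b$ copies of $\ZZ_p[\zeta_p]$ and $c$ copies of the regular lattice $\ZZ_p[P]$. Reducing mod $p$ these three summands yield Jordan blocks of sizes $1$, $p-1$ and $p$ respectively, so
\[
\dim\tilde\ft-\dim\tilde\ft^{\tilde u}=b(p-2)+c(p-1).
\]
The condition $\tilde u\neq 1$ forces $b+c\geq 1$, so this codimension is at least $p-2\geq 3$. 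This contradicts the codimension-one conclusion from the previous paragraph.

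The main obstacle I anticipate is the codimension bound in the last paragraph: everything else is routine bookkeeping with Lemma \ref{lemma2} and Lemma \ref{rolflemma}, so the proof really hinges on recognising the appropriate $\ZZ[P]$-module structure on $X_*(T)$ and extracting the numerical constraint from good characteristic. A secondary concern is justifying carefully that tori of $\fh$ extend to Lie algebras of maximal tori of $G$; this is a standard consequence of the fact that, in good characteristic, Cartan subalgebras of $\fg$ are precisely the Lie algebras of maximal tori, combined with the fact that any torus lies in a Cartan subalgebra.
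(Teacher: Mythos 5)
Your argument follows the paper up to the point where Lemma \ref{lemma2} and Lemma \ref{rolflemma} produce an embedding $S(\fh,\ft)\hookrightarrow S(\fg,\tilde\ft)\cong W(G,T)$, but the step that carries all the weight afterwards --- that the order-$p$ element $\tilde u\in W$ fixes a subspace of $\tilde\ft$ of codimension exactly one --- is not justified, and is in fact false in general. The lattice-theoretic half of your argument is correct (by Diederichsen--Heller--Reiner, a nontrivial order-$p$ automorphism of a lattice has fixed space of codimension at least $p-2$ on the reduction mod $p$), but precisely for that reason the element of $W$ genuinely induced by a unipotent $u\in S(\fh,\ft)$ cannot act on $\Lie(T)$ as $u\oplus\id_{\tilde\ft'}$; taking the formula $g\mapsto g\oplus\id_{\tilde\ft'}$ literally as describing the action of the corresponding Weyl-group element on all of $\tilde\ft$ is the gap. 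A decisive sanity check: nothing in your proof uses that $G$ is exceptional, only that $p\geq 5$, so the same argument would forbid every restricted embedding of $W(2;\underline{1})$ (which has $\mu=2$ and toral stabilizer $\GL_2(\FF_p)$ by Lemma \ref{S(g)cartan type}) into $\fsl_N=\Lie(\SL_N)$ with $p\nmid N$ and $p\geq5$. But the adjoint representation gives such an embedding $W(2;\underline{1})\hookrightarrow\fgl_{2p^2}\hookrightarrow\fsl_{2p^2+1}$. In that example the Weyl-group element attached to $u$ is the permutation of the $\ft$-weight vectors of $W(2;\underline{1})$ induced by the action of $u$ on the $p^2$ weights, i.e.\ a product of $2(p-1)$ disjoint $p$-cycles in $S_N$; its fixed space on $X_*(T)\otimes\FF_p$ has codimension $2(p-1)^2$, consistent with your lattice bound and nowhere near one.

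The paper extracts only weaker, and safer, consequences from the embedding: abstractly, $\GL_2(\FF_p)$ is a subgroup of $W$, so $p(p-1)^2(p+1)$ divides $|W|$, which already reduces to $E_6,E_7$ with $p=5$ and $E_7,E_8$ with $p=7$; and the image lies in the reflection subgroup $W'=W(C_G(\ft'),T)$, which is then excluded using the classification of reflection subgroups of the exceptional Weyl groups together with $p^2\nmid|W|$. If you want to rescue your idea, you would have to compute the actual fixed-space codimension of the image of $u$, which requires knowing the $\ft$-weight multiplicities of $\fh$ acting on $\fg$ --- and that is exactly where the exceptionality of $G$ (e.g.\ the small rank of $T$ relative to $p$) would have to enter the argument.
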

\begin{proof}
Suppose that $\fh$ is isomorphic to some restricted subalgebra of $\fg$.
We still denote by $\fh$.
By assumption, there exists some torus $\ft_0\in\Tor(\fh)$ and $\dim_k\ft_0=2$.
Since $\fg$ is an algebraic Lie algebra, we can find a maximal torus $T\subseteq G$
such that
$$\ft:=\Lie(T)\in\Tor(\fg)$$
and $\ft_0\subseteq\ft$ (cf. \cite[Corollary 4.4]{Fa04}).
Moreover, there is a subtorus $\ft'\subseteq\ft$ such that $\ft=\ft_0\oplus\ft'$ (see \cite[Page 1347]{BFS}).
According to Lemma \ref{lemma2}, we have the following inclusion map:
\begin{equation}\label{111}
\iota:S(\fh,\ft_0)\hookrightarrow S(\fg,\ft);~h\mapsto h\oplus\id_{\ft'}.
\end{equation}

By assumption, $p\geq 5$ (Table \ref{table1}).
Thanks to Lemma \ref{rolflemma}, this results:
\begin{equation}\label{222}
S(\fh,\ft_0)\hookrightarrow S(\fg,\ft)\cong W(G,T)\cong\Nor_G(\ft)/\Cent_G(\ft).
\end{equation}

We denote by $G':=C_G(\ft')$ the centralizer of $\ft'$ in $G$.
Furthermore, $p$ is not a torsion prime for $G$ (See Table \ref{table1}).
It follows from Lemma \ref{Steinberglemma} that $G'$
is a connected reductive subgroup of $G$.
It is clear that $T\subseteq G'$.
We denote by $W':=W(G',T)\cong\Nor_{G'}(\ft)/\Cent_{G'}(\ft)$ the Weyl group of $G'$ (Lemma \ref{rolflemma}).
Then (\ref{111}) is equivalent to the following:
\begin{equation}\label{333}
\iota:S(\fh,\ft_0)\hookrightarrow W'\subseteq W.
\end{equation}
By assumption, we know that $$|S(\fh)|=p(p-1)^2(p+1).$$
On the other hand,
(\ref{222}) implies that $S(\fh)$ can be identified with a subgroup of $W$,
so that the order of Weyl group divided by $|S(\fh)|$.
As one sees from the Table \ref{table1}, we only need to consider the following cases:

\begin{equation}
\fg\cong E_6,E_7;~p=5~~~~~~~~~\text{or}~~~~~~\fg\cong E_7,E_8;~p=7.
\end{equation}

According to (\ref{333}), we know that one of the following two cases must be satisfied:

\begin{enumerate}
\item[(a)] $W'\cong\GL_2(\FF_p)$.
\item[(b)] $W'\cong\GL_2(\FF_p)\ltimes(\FF_p)^{s},~s\in\mathbb{N}\setminus\{0\}$.
\end{enumerate}

By a direct computation, we obtain
\begin{center}
\begin{tabular}{|c|c|c|c|c|}
\hline
Type  & $p=5$~(a)    & $p=5$~(b) & $p=7$~(a)& $p=7$~(b)\\
\hline
$E_6$ & $|W'|=480$ & $|W'|$& &\\
\hline
$E_7$ &$|W'|=480$  & $|W'|$& $|W'|=2016$ &$|W'|$\\
\hline
$E_8$ &  & &$|W'|=2016$ &$|W'|$ \\
\hline
\end{tabular}
\end{center}

Note that $W'$ is a reflection subgroup of $W$.
For the case (a), we will obtain a contradiction from \cite[Section 6, Table 3, 4, 5]{DPR}.
For the case (b), it is obvious that the order of Weyl group $W$ can be divided by $p^2$,
this is also a contradiction (See Table \ref{table1}).
\end{proof}

We retain our assumption that $G$ is an exceptional algebraic $k$-group and $p$ is good for $G$.
Recently, it is proved by Herpel and Stewart (\cite[Theorem 1.3]{HS})
that only the Witt algebra $W(1,\underline{1})$ can occur as a simple subalgebra of non-classical type of $\fg$.
Of course, by simple dimension arguments, the large dimensions of non-classical simple Lie algebras cannot fit
inside the exceptional Lie algebra (see \cite[Section 4]{HS} for details).

\begin{Corollary}[Herpel-Stewart]
Let $G$ be a simple, simply connected exceptional algebraic $k$-group with Lie algebra $\fg:=\Lie(G)$,
where $p:=\mathrm{char}(k)$ is a good prime for $G$.
Let $\fL$ be one of the following simple Lie algebras:
$$H(2;\underline{1};\Phi(\tau))^{(1)},~W(2,\underline{1}),~W(1,\underline{2}),$$
or $$\cM(1,1),~p=5.$$
Then $\fg$ does not contain Lie subalgebras isomorphic to $\fL$.
\end{Corollary}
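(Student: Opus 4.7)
The approach is to reduce the Corollary to four applications of Theorem \ref{maintheorem2}, one for each candidate simple Lie algebra $\fL$.

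First I would handle the passage from a bare Lie-algebra embedding to a restricted embedding. Suppose for contradiction that some $\fL$ in the list embeds as a Lie subalgebra of $\fg$. Since $\fg$ is itself restricted, the restricted subalgebra of $\fg$ generated by $\fL$ is a $p$-envelope of $\fL$, so by the universal property of the minimal $p$-envelope, $\fL_p$ embeds in $\fg$ as a restricted subalgebra. Setting $\fh := \fL_p$, it therefore suffices to verify in each case that $\mu(\fh) = 2$ and $S(\fh) \cong \GL_2(\FF_p)$, whereupon Theorem \ref{maintheorem2} delivers the desired contradiction.

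The second step is a short case-by-case verification of these two invariants. For $\fL = W(2;\underline{1})$, which is already restricted, one has $\mu(\fL) = 2$ and Lemma \ref{S(g)cartan type} yields $S(\fL) \cong \GL_2(\FF_p)$. For the non-restricted Zassenhaus algebra $\fL = W(1;\underline{2})$, one has $|\underline{n}| = 2$, so Theorem \ref{S(g)Zassenhaus} gives both $\mu(\fL_p) = 2$ and $S(\fL_p) \cong \GL_2(\FF_p)$. For $\fL = H(2;\underline{1};\Phi(\tau))^{(1)}$, both statements are contained in Lemma \ref{toralstablizerofh21phitau}. Finally, for the restricted Melikian algebra $\cM(1,1)$ at $p = 5$, the equality $\mu(\cM(1,1)) = 2$ is recorded earlier in the text, and Lemma \ref{S(g)melikian} supplies $S(\cM(1,1)) \cong \GL_2(\FF_p)$.

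Once the invariants are in place, Theorem \ref{maintheorem2} closes each of the four cases immediately, and the Corollary is proved. The only genuine subtlety lies in the first step: one must take care that passing from $\fL \hookrightarrow \fg$ to $\fL_p \hookrightarrow \fg$ really is automatic from the restrictedness of $\fg$, since Theorem \ref{maintheorem2} is formulated for \emph{restricted} subalgebras and the invariants $\mu$ and $S$ are defined only at the restricted level. Everything else reduces to quoting the results already established in Sections \ref{Section 1}--\ref{section subalgebra}.
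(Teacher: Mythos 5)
Your overall architecture coincides with the paper's: verify $\mu(\fL_p)=2$ and $S(\fL_p)\cong\GL_2(\FF_p)$ case by case from Lemmas \ref{S(g)cartan type}, \ref{S(g)melikian}, \ref{toralstablizerofh21phitau} and Theorem \ref{S(g)Zassenhaus}, and then feed $\fh=\fL_p$ into Theorem \ref{maintheorem2}. The case-by-case verification of the two invariants is correct as written.

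The gap is exactly in the step you single out as the only subtlety, and your justification for it does not work. The minimal $p$-envelope has no mapping-out universal property, so ``by the universal property of the minimal $p$-envelope, $\fL_p$ embeds in $\fg$'' is not an argument. What \emph{is} automatic is that the restricted subalgebra $E\subseteq\fg$ generated by $\fL$ is \emph{some} $p$-envelope of $\fL$ (here one uses that $\fL$ is perfect, so $\fL\unlhd E$). But $E$ is only a central extension $0\to C(E)\to E\to\fL_p\to 0$ of the minimal $p$-envelope, and such an extension need not split in the restricted category: already for $\fsl_2\oplus kc$ with the twisted $p$-map $x^{[p]'}=x^{[p]}+\psi(x)c$, where $\psi$ is a nonzero $p$-semilinear functional, the unique Lie-algebra complement of the centre is $\fsl_2$ itself, which is not $[p]'$-closed. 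Hence $\fL\subseteq\fg$ does not formally yield a restricted embedding $\fL_p\hookrightarrow\fg$; nor can you simply take $\fh:=E$ in Theorem \ref{maintheorem2} without first controlling $\mu(E)$ and $S(E)$, since $C(E)$ could a priori contain a torus and raise $\mu$. The paper closes this gap by invoking the argument of \cite[Page 787]{HS} and \cite[(4.1),(4.3)]{P15}, which exploits that $\fg=\Lie(G)$ (Jordan decompositions, the structure of $C_\fg(\fL)$) to produce an honest restricted copy of $\fL_p$ inside $\fg$. You need to reproduce or cite that argument; once that is in place, the remainder of your proof goes through verbatim.
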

\begin{proof}
We denote by $\fh:=\fL_p$ the minimal $p$-envelop of $\fL$ (see \cite[Corollary 1.1.8]{S1}).
Suppose that $\fg$ contains $\fL$.
By the same argument as in \cite[Page 787]{HS} (See also \cite[(4,1)(4.3)]{P15}),
we know that $\fg$ contains $\fh$,
and $\fh$ is a restricted subalgebra of $\fg$.

According to Lemma \ref{S(g)cartan type}, \ref{S(g)melikian},
\ref{toralstablizerofh21phitau} and Theorem \ref{S(g)Zassenhaus},
we have $S(\fh)\cong\GL_2(\FF_p)$.
However, this contradicts Theorem \ref{maintheorem2}.
\end{proof}

\begin{Remarks}
(1). Recently, Premet gave an alternative proof for the most difficult case
where $\fL\cong H(2;\underline{1};\Phi)^{(2)}$ (see \cite[Proposition 4.1]{P15}).
Our approach is completely different.

(2). For the case $\fL\cong\cM(1,1)$,
we can also use the dimension argument (See \cite[Lemma 4.1]{HS}).
\end{Remarks}

\section{Weight space decomposition}

In this section, we give some applications,
concern the distribution of weight spaces.

Let $(\fg,[p])$ be a restricted Lie algebra.
By definition, a restricted $\fg$-module $M$ is a $\fg$-module
such that for every $x\in\fg$, the operator $m\mapsto x^{[p]}.m$ is the $p$-th power of the transformation
of $M$ effected by $x$.
If $\ft\subseteq\fg$ is a torus,
then $M$ is a completely reducible $\ft$-module,
give rise the weight space decomposition
$$M=\bigoplus\limits_{\lambda\in\Lambda_M}M_{\lambda}.$$
Each $\lambda\in\Lambda_M$ is a linear form satisfying $\lambda(t^{[p]})=\lambda(t)^p$
for every $t\in\ft$.
As a result, $\Lambda_M\subseteq\Lie_p(\ft,k)$ is a subset of the character group of $\ft$,
i.e., the additive group of all homomorphisms $\ft\rightarrow k$ of restricted Lie algebras,
where the $p$-map on $k$ is the associative $p$-th power.

We first generalize a basic result from \cite{Fa04}.

\begin{Lemma}\label{weight space decomp}
Let $\ft\subseteq\fg$ be a maximal torus. Then there exists some $\varphi\in\cX_\ft(k)$ such that
$\ft':=\varphi(\ft)$ is contained in some torus of maximal dimension.
Moreover, if $M$ is a restricted $\fg$-module with weight space decompositions
$M=\oplus_{\lambda\in\Lambda}M_\lambda$, and $M=\oplus_{\lambda'\in\Lambda'}M_{\lambda'}$,
respectively, then there exists an isomorphism $\xi:\ft\rightarrow\ft'$ such that $\lambda\mapsto\lambda\circ\xi^{-1}$
defines a bijection $\Lambda\rightarrow\Lambda'$ satisfying $\dim_kM_{\lambda\circ\xi^{-1}}=\dim_kM_\lambda$
for every $\lambda\in\Lambda$.
\end{Lemma}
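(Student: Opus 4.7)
For the first assertion, I would study the locus
\[
V := \{\varphi\in\cX_\ft(k) : \varphi(\ft)\subseteq\tilde{\ft}\text{ for some }\tilde{\ft}\in\Tor(\fg)\}
\]
and try to show it is non-empty. The natural attack is through the upper-semicontinuous function $\varphi\mapsto\dim_k\Cent_\fg(\varphi(\ft))$ on the irreducible variety $\cX_\ft(k)$: at a point where the centralizer is as large as possible, one expects $\Cent_\fg(\varphi(\ft))$ to contain a torus of maximal dimension, providing the desired $\tilde{\ft}$. Extracting elements of $\cX_\ft(k)$ outside the $\Aut_p(\fg)^\circ$-orbit of $\iota$---which only produces $\Aut_p(\fg)^\circ$-conjugates of $\ft$, none of which sits in a maximal-dimension torus if $\ft$ itself does not---is where I expect the main obstacle to lie, and I would combine the smoothness of $\cT_\fg$ from \cite[(1.4)]{FV01} with deformation-theoretic input from \cite{Fa04}.

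Granted~(1), set $\ft':=\varphi(\ft)$ and let $\xi\colon\ft\to\ft'$ be the isomorphism of restricted Lie algebras induced by $\varphi$. Then $\mu\mapsto\mu\circ\xi$ gives a bijection $\Hom_p(\ft',k)\to\Hom_p(\ft,k)$. For $m\in M$ and $\mu\in\Hom_p(\ft',k)$, the identity $s.m=\mu(s)m$ for all $s\in\ft'$ is equivalent to $\varphi(t).m=(\mu\circ\xi)(t)m$ for all $t\in\ft$, so the genuine $\ft'$-weight space $M_\mu$ coincides with the $\varphi$-twisted space
\[
M_{\mu\circ\xi}^{\varphi}:=\{m\in M : \varphi(t).m=(\mu\circ\xi)(t)m\text{ for all }t\in\ft\}.
\]
It therefore suffices to prove $\dim_k M_\lambda=\dim_k M_\lambda^{\varphi}$ for every $\lambda\in\Hom_p(\ft,k)$.

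The technical core is to show that for each character $\lambda$ of $\ft$, the function
\[
d_\lambda\colon\cX_\ft(k)\to\mathbb{Z}_{\geq 0},\qquad \psi\mapsto\dim_k M_\lambda^\psi,
\]
is constant. It is upper-semicontinuous, because $M_\lambda^\psi$ is the joint kernel of the linear family $\rho(\psi(t_i))-\lambda(t_i)\id_M$ as $\{t_i\}$ runs over a fixed basis of $\ft$, and joint-kernel dimension is upper-semicontinuous in the parameters. On the other hand, for each $\psi\in\cX_\ft(k)$ the image $\psi(\ft)$ is a torus in $\fg$, hence acts semisimply on the restricted module $M$, giving $\sum_\lambda d_\lambda(\psi)=\dim_k M$, independent of $\psi$. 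An upper-semicontinuous $\mathbb{Z}_{\geq 0}$-valued family with constant sum is automatically locally constant, and irreducibility of $\cX_\ft(k)$ promotes this to genuinely constant. Specializing at $\psi=\iota$ and $\psi=\varphi$ yields $\dim_k M_\lambda=\dim_k M_{\lambda\circ\xi^{-1}}$, so $\lambda\mapsto\lambda\circ\xi^{-1}$ preserves weight multiplicities and restricts to the required bijection $\Lambda\to\Lambda'$. Part~(1) is the main obstacle; part~(2) then follows cleanly from the semicontinuity-plus-constant-sum principle.
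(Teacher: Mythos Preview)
Your argument for part~(2) is correct and is a clean, somewhat more elementary variant of the paper's route. The paper passes to the universal module $\tilde{M}:=M\otimes_k k[\cX_\ft]$, decomposes it under the universal embedding $j\colon\ft\hookrightarrow\fg\otimes_k k[\cX_\ft]$, and then specializes at the two points $\iota$ and $\varphi$, invoking \cite[(4.2)]{Fa04}. Your semicontinuity-plus-constant-sum trick is the pointwise shadow of that: freeness of the universal weight spaces is precisely what forces each $d_\lambda$ to be constant. Both arguments use the same underlying principle; yours avoids the coordinate ring and works directly on $k$-points.

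For part~(1), however, your heuristic does not lead where you need to go. The function $\varphi\mapsto\dim_k C_\fg(\varphi(\ft))$ is indeed upper-semicontinuous, but the quantity you actually need to push up is the \emph{toral rank} $\mu(C_\fg(\varphi(\ft)))$, not the dimension of the centralizer. These are quite different: at the base point $\varphi=\iota$ the centralizer $C_\fg(\ft)$ may well be large-dimensional while still having $\mu(C_\fg(\ft))=\dim\ft<\mu(\fg)$, since $\ft$ is maximal. There is no general mechanism by which maximizing centralizer dimension forces a torus of dimension $\mu(\fg)$ to appear inside it. The paper handles this by citing \cite[Theorem~3.5(3)]{Fa04} directly: working with the universal zero-weight space $\tilde{\fg}_0\subseteq\fg\otimes_k k[\cX_\ft]$, that result guarantees a specialization $x_0\in\cX_\ft(k)$ with $\mu(\tilde{\fg}_0(x_0))=\mu(\fg)$, i.e.\ a $\varphi=\varphi_{x_0}$ whose centralizer has full toral rank, and hence $\varphi(\ft)$ sits inside a torus of maximal dimension. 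You correctly sensed that the real input lies in \cite{Fa04}, but the specific invariant to track is $\mu$ of the centralizer, and the argument there is not a simple semicontinuity bound.
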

\begin{proof}
We put $A:=k[\cX_\ft]$ and $j:\ft\hookrightarrow\tilde{\fg}$ be
the universal embedding of $\ft$ (see \cite[Page 4186]{Fa04}),
where $\tilde{\fg}:=\fg\otimes_k A$ is a restricted $A$-Lie algebra.
The torus acts on $\tilde{\fg}$ via $j$, we denote by $\tilde{\fg}_0\subseteq\tilde{\fg}$
the corresponding zero weight space.
For any homomorphism $x\in\Spec_k(A)(k)$, let
$$\varphi_x:=(\id_{\fg}\otimes x)\circ j\in\cT_\fg(k)$$
denote the corresponding embedding.
Then
$$\tilde{\fg}_0(x)=(\id_\fg\otimes x)(\tilde{\fg}_0)$$
is the zero weight space relative to the torus $\varphi_x(\ft)$.
According to \cite[Theorem 3.5(3)]{Fa04}, there exists one element $x_0\in\cX_\ft(k)$
such that $\mu(\tilde{\fg}_0(x_0))=\mu(\fg)$.
Note that $\tilde{\fg}_0(x_0)=C_\fg(\varphi_{x_0}(\ft))$.
Let $\ft':=\varphi_{x_0}(\ft)$, so that $\ft'$ is contained in some torus of maximal dimension.

Let $$\tilde{M}=\bigoplus\limits_{\gamma\in\Gamma_{\tilde{M}}}\tilde{M}_{\gamma}$$
be the weight space decomposition of $\tilde{M}:=M\otimes_k A$
induced by the universal embedding $j$.
We consider the the map $\varphi_{x_0}$ and the canonical embedding $\ft\hookrightarrow\fg$.
Then the second assertion follows from the same argument as in \cite[(4.2)]{Fa04}.
\end{proof}

\begin{Lemma}\cite[Corollary 6.2, Corollary 6.3]{BFS}.\label{BFSlemma}
Let $\ft\in\Tor(\fg)$ be a torus of maximal dimension.
Suppose that $S(\fg)\cong\GL_{\mu(\fg)}(\FF_p)$.
Let $M=\oplus_{\lambda\in\Lambda_M}M_{\lambda}$ be a restricted $\fg$-module.
then the following statements hold:
\begin{enumerate}
\item[(1)] $\Lambda_M=\{0\}$, or $\Lambda_M$ contains $\Lie_p(\ft,k)\setminus\{0\}$.
\item[(2)] All weight spaces of $M$ belonging to
nonzero weights have the same dimension.
\end{enumerate}
In particular, let $\fg=\fg_0\oplus\bigoplus_{\alpha\in\Phi}\fg_{\alpha}$ be the root
space decomposition of $\fg$ relative to $\ft$,
then $\Phi\cup\{0\}=\Lie_p(\ft,k)$, and all root spaces of $\fg$ have same dimension.
\end{Lemma}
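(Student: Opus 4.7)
The plan is to show that $S(\fg,\ft)$ acts on the weight set $\Lambda_M$ via its natural action on $\Lie_p(\ft,k)\cong\FF_p^{\mu(\fg)}$, with weight-space dimensions preserved. Once this is established, the transitivity of $\GL_{\mu(\fg)}(\FF_p)$ on $\FF_p^{\mu(\fg)}\setminus\{0\}$ yields both (1) and (2) simultaneously.

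Here is how I would argue the invariance. For each $\varphi\in\cX_\ft(k)$, the torus $\varphi(\ft)\subseteq\fg$ decomposes $M$ into weight spaces, and pulling the weights back to $\ft$ along $\varphi$ gives a set $\Lambda_M(\varphi)\subseteq\Lie_p(\ft,k)$. A direct check shows $\Lambda_M(\varphi\circ h^{-1})=\Lambda_M(\varphi)\circ h^{-1}$ for every $h\in\Aut_p(\ft)$, and the weight-space dimensions transform accordingly. On the other hand, the universal embedding $j:\ft\hookrightarrow\tilde{\fg}=\fg\otimes_k k[\cX_\ft]$ used in the proof of Lemma \ref{weight space decomp} induces a generic weight decomposition of $\tilde M:=M\otimes_k k[\cX_\ft]$ that specializes, on a dense open subset of $\cX_\ft(k)$, to the weight decomposition of $M$ with constant weight set and constant dimensions. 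Since by definition $h\in S(\fg,\ft)$ stabilizes $\cX_\ft(k)$, for any sufficiently generic $\varphi$ both $\varphi$ and $\varphi\circ h^{-1}$ lie in this open subset; combining this with the transformation rule above forces $\Lambda_M(\varphi)\circ h^{-1}=\Lambda_M(\varphi)$ with matching dimensions, so $\Lambda_M$ is $S(\fg,\ft)$-stable.

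Now invoke the hypothesis $S(\fg,\ft)\cong\GL_{\mu(\fg)}(\FF_p)$. Its orbits on $\Lie_p(\ft,k)\cong\FF_p^{\mu(\fg)}$ are precisely $\{0\}$ and $\FF_p^{\mu(\fg)}\setminus\{0\}$. Hence either $\Lambda_M\subseteq\{0\}$ or $\Lambda_M\supseteq\Lie_p(\ft,k)\setminus\{0\}$, proving (1), while all nonzero weight spaces are mutually conjugate under the $S(\fg,\ft)$-action and therefore share a common dimension, proving (2). The ``in particular'' clause is immediate from (1) and (2) applied to the restricted adjoint module $M=\fg$: its nonzero weights form $\Phi$, so $\Phi=\Lie_p(\ft,k)\setminus\{0\}$ (assuming $\ft$ is not central, which is the relevant case) and all root spaces have the same dimension.

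The principal obstacle is the generic-constancy step. Although it is morally clear that in a flat family of embeddings the weight decomposition is locally constant on a dense open set, making this rigorous requires either explicit use of the universal embedding and upper-semicontinuity of weight-space dimensions on $\cX_\ft$, or the translation into the universal family language used by Farnsteiner in \cite{Fa04}. Once this constancy is in hand, everything else in the argument is formal.
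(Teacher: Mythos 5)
Your strategy -- make $S(\fg,\ft)$ act on $\Lambda_M$ with dimensions preserved, then use transitivity of $\GL_{\mu(\fg)}(\FF_p)$ on $\Lie_p(\ft,k)\setminus\{0\}$ -- is exactly the argument behind the cited result; the paper itself gives no proof here, only the reference to \cite[Cor.~6.2, 6.3]{BFS} together with the remark that the hypothesis $S(\fg)\cong\GL_{\mu(\fg)}(\FF_p)$ is equivalent to the irreducibility of $\cT_\fg(k)$ assumed there. So the approach is right, and the only substantive issue is the one you flag yourself: the constancy of the weight data along $\cX_\ft(k)$.

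Two comments on that step. First, it is not merely ``morally clear on a dense open set'': it holds at \emph{every} point of the component, and this is essentially \cite[Theorem 4.2]{Fa04} (the same mechanism as in Lemma~\ref{weight space decomp} of this paper). The reason is that for a toral basis $t_1,\dots,t_\mu$ of $\ft$ (so $t_i^{[p]}=t_i$), a weight $\gamma$ of $\tilde M=M\otimes_k k[\cX_\ft]$ relative to the universal embedding satisfies $\gamma(t_i)^p=\gamma(t_i)$ in the reduced, irreducible (hence connected) ring $k[\cX_\ft]$, forcing $\gamma(t_i)\in\FF_p$; thus the weight set is literally constant on the component, and the weight spaces $\tilde M_\gamma$ are projective $k[\cX_\ft]$-modules of constant rank, so the dimensions specialize constantly as well. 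Second, this strengthening is not optional for your argument: the lemma concerns the weight decomposition relative to the \emph{given} torus $\ft$, i.e.\ the identity embedding $\iota\in\cX_\ft(k)$, and a statement valid only for ``sufficiently generic $\varphi$'' would prove invariance of $\Lambda_M(\varphi)$ for generic $\varphi$ without transferring it to $\Lambda_M=\Lambda_M(\iota)$ unless you separately show $\iota$ lies in your open set. With everywhere-constancy in hand, taking $\varphi=\iota$ and $\psi=\iota\circ h^{-1}$ for $h\in S(\fg,\ft)$ gives $\Lambda_M\circ h=\Lambda_M$ with matching dimensions, and the rest of your argument (orbits of $\GL_{\mu(\fg)}(\FF_p)$, the adjoint module for the ``in particular'' clause) is fine; your caveat that $\ft$ must be non-central for $\Phi\neq\emptyset$ is also correctly placed.
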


\begin{Remarks}
(1). In \cite{BFS}, the authors use the condition that $\cT_\fg(k)$ is irreducible.
In fact, $\cT_\fg(k)$ is irreducible if and only if
$S(\fg)\cong\GL_{\mu(\fg)}(\FF_p)$ (see \cite[Theorem 4.1]{Fa04}).

(2). Let $\ft\in\Tor(\fg)$ be a torus of maximal dimension.
There is an isomorphism: $\Lie_p(\ft,k)\cong\Hom_{\FF_p}(\FF_p^{\mu(\fg)},\FF_p)$.
\end{Remarks}

Recently, Skryabin asked the following question (see \cite[Question 5]{Skr15}).

\begin{Question}\label{Skryabin question}
Let $\mathfrak{L}$ be a simple finite-dimensional Lie algebra over an algebraically closed field
of characteristic $p>2$.
Suppose that $\mathfrak{L}$ is not restricted and $\ft$ is any torus in $\mathfrak{L}_p$.
Is it always true that all nonzero elements of the group
$$\{\alpha\in\ft^{*};~\alpha(t^{[p]})=\alpha(t)^p~~\text{for all}~~t\in\ft\}$$
are roots of $\mathfrak{L}$, that is, $\mathfrak{L}$ has precisely $p^{\dim\ft}-1$ nonzero roots
with respect to $\ft$?
\end{Question}

For convenience, we let $\Lambda(\fL,\ft)$ be the set
of nonzero weights relative to $\ft$.
Hence, Skryabin's question is true is equivalent to say $\Lambda(\fL,\ft)=\Lie_p(\ft,k)\setminus\{0\}$ for any torus $\ft$ in $\fL_p$.

In fact, it suffices to consider the torus of maximal dimension.
\begin{Proposition}\label{proposition 1}
Suppose that Question \ref{Skryabin question} is ture for some tours $\ft_0\in\Tor(\fL_p)$.
Then it is true for all tori.
In particular, if $S(\fL_p)\cong\GL_{\mu(\fL_p)}(\FF_p)$, then Question \ref{Skryabin question} is true.
\end{Proposition}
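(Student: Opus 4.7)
The strategy is a two-step reduction, with Lemma~\ref{weight space decomp} as the essential ingredient. First, I would reduce to the case of an inclusion-maximal torus: given an arbitrary torus $\ft\subseteq\fL_p$, pick any inclusion-maximal torus $\ft_1\supseteq\ft$. Extending a toral $\FF_p$-basis of $\ft$ to one of $\ft_1$ shows that the restriction map $\Lie_p(\ft_1,k)\twoheadrightarrow\Lie_p(\ft,k)$ is surjective, and the $\ft_1$-weight decomposition of $\fL$ refines the $\ft$-weight decomposition by restriction. Consequently, any nonzero $\mu\in\Lie_p(\ft,k)$ lifts to some nonzero $\lambda\in\Lie_p(\ft_1,k)$; if $\lambda\in\Lambda(\fL,\ft_1)$ then $\mu=\lambda|_\ft\in\Lambda(\fL,\ft)$. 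So it suffices to treat inclusion-maximal tori.

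For such a $\ft_1$, Lemma~\ref{weight space decomp} furnishes $\varphi\in\cX_{\ft_1}(k)$ with $\ft_1':=\varphi(\ft_1)$ contained in some $\tilde{\ft}\in\Tor(\fL_p)$, together with an $\FF_p$-linear weight bijection $\Lambda(\fL,\ft_1)\leftrightarrow\Lambda(\fL,\ft_1')$ induced by $\xi:=\varphi|_{\ft_1}$ via $\lambda\mapsto\lambda\circ\xi^{-1}$. To push the hypothesis from $\ft_0$ to the auxiliary max-dim torus $\tilde{\ft}$, I would use the abstract weight-preserving identification of restricted tori between any two elements of $\Tor(\fL_p)$ supplied by \cite[Theorem~4.1]{Fa04} and the scheme-theoretic setup (applied also to $\ft_0$ via Lemma~\ref{weight space decomp}). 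The resulting $\FF_p$-linear isomorphism $\Lie_p(\ft_0,k)\xrightarrow{\sim}\Lie_p(\tilde{\ft},k)$ carries $\Lambda(\fL,\ft_0)$ bijectively onto $\Lambda(\fL,\tilde{\ft})$, so the hypothesis propagates: $\Lambda(\fL,\tilde{\ft})\supseteq\Lie_p(\tilde{\ft},k)\setminus\{0\}$. Restricting from $\tilde{\ft}$ to $\ft_1'\subseteq\tilde{\ft}$ and then pulling back via $\xi^{-1}$ gives the conclusion for $\ft_1$, hence for $\ft$.

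For the \emph{in particular} clause, I would view $\fL$ as a restricted $\fL_p$-module via the adjoint action; this is legitimate since $\fL$ is a Lie ideal in $\fL_p$ and $\ad(x^{[p]})=\ad(x)^p$ holds on $\fL$. Applying Lemma~\ref{BFSlemma}(1) with $M=\fL$ and $\ft=\ft_0\in\Tor(\fL_p)$ produces the dichotomy $\Lambda(\fL,\ft_0)=\{0\}$ or $\Lambda(\fL,\ft_0)\supseteq\Lie_p(\ft_0,k)\setminus\{0\}$. Since $\fL_p\hookrightarrow\Der(\fL)$ is faithful by the very definition of the minimal $p$-envelope, the first alternative forces $\ft_0=0$; assuming $\mu(\fL_p)\geq 1$ (the assertion is vacuous otherwise), we are in the second alternative and the first part of the proposition applies.

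The main obstacle is the weight-data transfer between the given $\ft_0$ and the auxiliary $\tilde{\ft}$ produced by Lemma~\ref{weight space decomp}: in general $\Tor(\fL_p)$ need not form a single $\Aut_p^\circ$-orbit, so no conjugating automorphism of $\fL_p$ is available to move $\ft_0$ to $\tilde{\ft}$. What saves the argument is precisely the abstract scheme-theoretic identification of weight data across components of $\cT_{\fL_p}$ encoded by Lemma~\ref{weight space decomp}, which does not require conjugacy.
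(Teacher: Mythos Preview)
Your proposal is correct and takes essentially the same approach as the paper: restriction of weights to subtori, Lemma~\ref{weight space decomp} for inclusion-maximal tori, and the weight-preserving identification between any two tori of maximal dimension (the paper's precise reference for that last step is \cite[Corollary~4.2]{Fa04}, which is the content you describe under the label \cite[Theorem~4.1]{Fa04}). The only difference is the order of the reductions---you work inward from an arbitrary torus to $\ft_0$, the paper works outward from $\ft_0$---and your justification of the ``in particular'' clause via faithfulness of $\fL_p\hookrightarrow\Der(\fL)$ is exactly the paper's appeal to simplicity of $\fL$.
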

\begin{proof}
It is clear that $\fL$ is a restricted $\fL_p$-module (cf. \cite [Page 66, Proposition 1.3]{SF}).
Let
$$\fL=C_{\fL}(\ft_0)\oplus\bigoplus_{\alpha\in\Lambda(\fL,\ft_0)}\fL_\alpha$$
be the weight space decomposition relative to $\ft_0$,
where $\Lambda_\fL$ is the set of nonzero weights.
By assumption, we have $\Lambda_\fL=\Lie_p(\ft_0,k)\setminus\{0\}$.

Thanks to \cite[Corollary 4.2]{Fa04}, we know that Question \ref{Skryabin question} is true for all tori of maximal dimension.

We claim that Question \ref{Skryabin question} is ture for any subtorus $\ft\subseteq\ft_0$. Take $\ft'\subseteq\ft$ such that $\ft_0=\ft\oplus\ft'$.
Let
$$\fL=C_{\fL}(\ft)\oplus\bigoplus_{\beta\in\Lambda(\fL,\ft)}\fL_\beta.$$
be the weight space decomposition with respect to $\ft$.
Also, it is easy to verify that
$$\fL_{\beta}=\bigoplus\limits_{\alpha\in\Lambda(\fL,\ft_0), \alpha|_\ft=\beta}\fL_{\alpha}.$$
Furthermore, for each nonzero weight $\beta$,
the number
$$\sharp\{\alpha\in\Lambda(\fL,\ft_0);~\alpha|_\ft=\beta\}=p^{\dim\ft'}.$$
Hence, we obtain $\Lambda(\fL,\ft)=\Lie_p(\ft,k)\setminus\{0\}$.

Now, it follows from Lemma \ref{weight space decomp} and the above arguments,
it is clear that Question \ref{Skryabin question} holds for
all maximal tori.
Last, let $\ft$ be an arbitrary torus.
Obviously, $\ft$ is contained in some maximal torus.
Now we only need to apply the same reasoning to $\ft$.

Suppose that $S(\fL_p)\cong\GL_{\mu(\fL_p)}(\FF_p)$.
Note that $\fL$ is simple, so that $\Lambda(\fL,\ft_0)\neq\emptyset$.
so that our assertion follows from Lemma \ref{BFSlemma}(1).
\end{proof}

\begin{Corollary}
Let $\fL$ be one of the following non-restricted simple Lie algebras
$$H(2;\underline{1};\Phi(\tau))^{(1)}, W(m,\underline{n})~(\underline{n}\neq\underline{1}).$$
Then $\Lambda(\fL,\ft)=\Lie_p(\ft,k)\setminus\{0\}$ for any torus $\ft$ in $\fL_p$.
\end{Corollary}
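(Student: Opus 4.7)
The plan is to derive the corollary as a direct consequence of Proposition~\ref{proposition 1}, once we verify its main hypothesis $S(\fL_p)\cong \GL_{\mu(\fL_p)}(\FF_p)$ for each of the Lie algebras in the list. Since $\fL$ is simple by assumption, it is a nontrivial restricted $\fL_p$-module (viewed via the adjoint action), so $\Lambda(\fL,\ft_0)\ne\emptyset$ for any $\ft_0\in\Tor(\fL_p)$, exactly as required in the final line of the proof of Proposition~\ref{proposition 1}.

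First I would treat $\fL=H(2;\underline{1};\Phi(\tau))^{(1)}$. Here $\fL_p$ has $\mu(\fL_p)=2$ (cited in the proof of Lemma~\ref{toralstablizerofh21phitau} from \cite{S2}), and Lemma~\ref{toralstablizerofh21phitau} yields $S(\fL_p)\cong \GL_2(\FF_p)=\GL_{\mu(\fL_p)}(\FF_p)$. Hence the hypothesis of Proposition~\ref{proposition 1} is met, and $\Lambda(\fL,\ft)=\Lie_p(\ft,k)\setminus\{0\}$ for every torus $\ft\subseteq\fL_p$.

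Next I would handle $\fL=W(m,\underline{n})$ with $\underline{n}\ne\underline{1}$, which is non-restricted. From the discussion preceding Lemma~\ref{generic torus of Zassenhaus} we have $\mu(\fL_p)=|\underline{n}|$, and Theorem~\ref{S(g)Zassenhaus} gives $S(\fL_p)\cong\GL_{|\underline{n}|}(\FF_p)=\GL_{\mu(\fL_p)}(\FF_p)$. Proposition~\ref{proposition 1} again applies, yielding the desired equality for every torus of $\fL_p$.

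There is no real obstacle: the entire content of the corollary is packaged into Proposition~\ref{proposition 1}, while the two earlier results Lemma~\ref{toralstablizerofh21phitau} and Theorem~\ref{S(g)Zassenhaus} already supply the crucial structural input $S(\fL_p)\cong \GL_{\mu(\fL_p)}(\FF_p)$. The only thing that a careful write-up has to make sure of is that $\fL$ itself (rather than some auxiliary module) serves as the restricted $\fL_p$-module producing a nonempty weight set---this is automatic from the simplicity of $\fL$ together with the fact that the torus acts nontrivially on $\fL_p\supseteq\fL$, so that $\Lambda(\fL,\ft_0)\ne\{0\}$ and Lemma~\ref{BFSlemma}(1) forces $\Lambda(\fL,\ft_0)\supseteq\Lie_p(\ft_0,k)\setminus\{0\}$.
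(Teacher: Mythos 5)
Your proposal is correct and coincides with the paper's own proof: both deduce the statement by combining Lemma \ref{toralstablizerofh21phitau} and Theorem \ref{S(g)Zassenhaus} to get $S(\fL_p)\cong\GL_{\mu(\fL_p)}(\FF_p)$ and then invoking Proposition \ref{proposition 1}. Your extra remark that simplicity of $\fL$ guarantees $\Lambda(\fL,\ft_0)\neq\emptyset$ is exactly the observation already built into the final step of the proof of Proposition \ref{proposition 1}.
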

\begin{proof}
It follows from Lemma \ref{toralstablizerofh21phitau} and Theorem \ref{S(g)Zassenhaus} that there is an isomorphism $$S(\fL_p)\cong\GL_{\mu(\fL_p)}(\FF_p),$$
so that our statement follows from Proposition \ref{proposition 1}.
\end{proof}

\begin{Remarks}
It should be noticed that Question \ref{Skryabin question} always holds
whenever $\fL$ is a non-classical restricted simple Lie algebra.
In fact, by the classification (cf.\cite{BW}, \cite{S3}), we know $\fL$ is Cartan type or Melikian type.
Hence, the fact follows from Lemma \ref{S(g)cartan type}, Lemma \ref{S(g)melikian} and Lemma \ref{BFSlemma}.
\end{Remarks}

We next give a short proof of some known results (see \cite[Lemma 4.6.4]{BW82}, \cite[Theorem 7.6.2, Theorem 7.6.5]{S1}, \cite[Theorem 10.3.2]{S2}).

\begin{Corollary}[Block-Wilson, Strade]
Let $\fL$ be one of the following non-restricted simple Lie algebras
$$H(2;\underline{1};\Phi(\tau))^{(1)}, W(1,\underline{n}),$$
and $\ft\in\Tor(\fL_p)$ a torus of maximal dimension.
Then the following statements hold:
\begin{enumerate}
\item[(1)] $\ft$ is a self-centralizing torus of $\fL_p$, i.e., $C_{\fL_p}(\ft)=\ft$.
\item[(2)] $\dim_k\fL_\alpha=1$ for every $\alpha\in\Lambda(\fL,\ft)$.
\item[(3)] $\dim_k(W(1,\underline{n})\cap\ft)=1$.
\item[(4)] $\dim_k(H(2;\underline{1};\Phi(\tau))^{(1)}\cap\ft)=0$, i.e., $C_{H(2;\underline{1};\Phi(\tau))^{(1)}}(\ft)=\{0\}$.
\end{enumerate}
\end{Corollary}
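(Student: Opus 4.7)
The plan is to combine the structural input proved earlier in the paper with the abstract dimension count afforded by Lemma \ref{BFSlemma}. In both cases Lemma \ref{toralstablizerofh21phitau} and Theorem \ref{S(g)Zassenhaus} yield $S(\fL_p)\cong\GL_{\mu(\fL_p)}(\FF_p)$. Write $\mu:=\mu(\fL_p)=\dim_k\ft$. Proposition \ref{proposition 1} then guarantees that $\Lambda(\fL,\ft)=\Lie_p(\ft,k)\setminus\{0\}$, a set of size $p^\mu-1$. Viewing both $\fL$ and $\fL_p$ as restricted $\fL_p$-modules via the adjoint action, Lemma \ref{BFSlemma}(2) provides a common dimension $d:=\dim_k\fL_\alpha$ for every nonzero weight $\alpha$, as well as a common dimension $d':=\dim_k(\fL_p)_\alpha$.

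The first task is to pin down $d=1$. Since the simple Lie algebra $\fL$ is centerless, its minimal $p$-envelope $\fL_p$ embeds into $\Der(\fL)$ via $\ad$; hence $\ft\neq 0$ cannot centralize all of $\fL$, forcing $d\geq 1$. Plugging this into
\[
\dim_k\fL=\dim_kC_\fL(\ft)+d(p^\mu-1)
\]
with $\dim_k\fL=p^n$, $\mu=n$ in the Zassenhaus case, and $\dim_k\fL=p^2-1$, $\mu=2$ in the Hamiltonian case, the non-negativity of $\dim_kC_\fL(\ft)$ forces $d=1$. This proves (2), and yields $\dim_kC_\fL(\ft)=1$ for $\fL=W(1,\underline{n})$ and $\dim_kC_\fL(\ft)=0$ for $\fL=H(2;\underline{1};\Phi(\tau))^{(1)}$. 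In the Hamiltonian case the inclusion $\fL\cap\ft\subseteq C_\fL(\ft)=0$ immediately gives (4).

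For (1), I would use that $\fL$ is a $p$-ideal of $\fL_p$, so $C_\fL(\ft)=\fL\cap C_{\fL_p}(\ft)$ and therefore
\[
\dim_kC_{\fL_p}(\ft)\leq\dim_kC_\fL(\ft)+\dim_k(\fL_p/\fL).
\]
Invoking the standard description of the minimal $p$-envelope---$\dim_k(\fL_p/\fL)=n-1$ in the Zassenhaus case (adjoining $\p^{p^i}$, $1\leq i\leq n-1$, is exactly what is needed to close $W(1,\underline{n})$ under $[p]$), and $\dim_k(\fL_p/\fL)=2$ in the Hamiltonian case---the right-hand side equals $\mu$ in both cases. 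Since $\ft\subseteq C_{\fL_p}(\ft)$ already has dimension $\mu$, equality must hold, proving (1). Statement (3) then drops out at once: $\fL\cap\ft\subseteq C_\fL(\ft)=\fL\cap C_{\fL_p}(\ft)=\fL\cap\ft$, so $\dim_k(\fL\cap\ft)=\dim_kC_\fL(\ft)=1$ in the Zassenhaus case.

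The main obstacle I anticipate is justifying the quoted values of $\dim_k(\fL_p/\fL)$; for $W(1,\underline{n})$ this is transparent from the explicit basis, but for $H(2;\underline{1};\Phi(\tau))^{(1)}$ it requires a brief appeal to the explicit construction of the minimal $p$-envelope in Strade's classification. Once these structural inputs are accepted, every one of the four assertions is a formal consequence of $S(\fL_p)\cong\GL_\mu(\FF_p)$ via the above weight-space count.
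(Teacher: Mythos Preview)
Your proposal is correct and follows essentially the same approach as the paper: both invoke Lemma \ref{toralstablizerofh21phitau} and Theorem \ref{S(g)Zassenhaus} to get $S(\fL_p)\cong\GL_{\mu}(\FF_p)$, then apply Lemma \ref{BFSlemma} to obtain the equal-dimension property of nonzero weight spaces and carry out a dimension count. The only cosmetic difference is that the paper runs the count twice (once for $\fL$, once for $\fL_p$, using $\dim_k W(1,\underline{n})_p=p^n+n-1$ and $\dim_k H(2;\underline{1};\Phi(\tau))^{(1)}_p=p^2+1$ directly), whereas you run it for $\fL$ and then bound $\dim_kC_{\fL_p}(\ft)$ via the codimension $\dim_k(\fL_p/\fL)$; since this codimension is precisely the difference of the two dimensions the paper quotes, the two computations are identical in content.
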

\begin{proof}
Let
$$\fL=C_{\fL}(\ft)\oplus\bigoplus_{\alpha\in\Lambda(\fL,\ft)}\fL_\alpha.$$ be the weight space decomposition with respect to $\ft$.
According to Lemma \ref{toralstablizerofh21phitau}, Theorem \ref{S(g)Zassenhaus} and Lemma \ref{BFSlemma}(2),
we have
$$\dim_k\fL=\dim_kC_{\fL}(\ft)+(p^{\mu(\fg)}-1)\dim_k\fL_{\alpha}.$$
Similarly, we also have
$$\dim_k\fg=\dim_kC_{\fg}(\ft)+(p^{\mu(\fg)}-1)\dim_k\fg_{\alpha},$$
where $\fg:=\fL_p$.
Note that the facts $$\dim_kW(1,\underline{n})=p^n,~\dim_kH(2;\underline{1};\Phi(\tau))^{(1)}=p^2-1$$ and $$\dim_kW(1,\underline{n})_p=p^n+n-1,~\dim_kH(2;\underline{1};\Phi(\tau))_p^{(1)}=p^2+1,$$
so that our assertions follow from a basic computation.
\end{proof}


\end{document}